\newtheorem{thm}{Theorem}[section]
\newtheorem{lem}[thm]{Lemma}
\newtheorem{prop}[thm]{Proposition}
\newtheorem{cor}[thm]{Corollary}
\newtheorem{ques}[thm]{Question}
\newtheorem*{thm*}{Theorem}
\newtheorem*{cor*}{Corollary}
\theoremstyle{remark}
\newtheorem*{rmk}{Remark}
\renewcommand{\bf}[1]{\boldsymbol{#1}}
\renewcommand{\rm}[1]{\mathrm{#1}}
\renewcommand{\cal}[1]{\mathcal{#1}}
\newcommand{\bbR}{\mathbb{R}}
\newcommand{\rmH}{\mathrm{H}}
\newcommand{\rmb}{\mathrm{b}}
\renewcommand{\d}{\mathrm{d}}
\newcommand{\rme}{\mathrm{e}}
\renewcommand{\P}{\mathcal{P}}
\newcommand{\Q}{\mathcal{Q}}
\renewcommand{\a}{\alpha}
\newcommand{\eps}{\varepsilon}
\newcommand{\g}{\gamma}
\renewcommand{\l}{\lambda}
\newcommand{\ol}[1]{\overline{#1}}
\renewcommand{\to}{\longrightarrow}
\renewcommand{\phi}{\varphi}
\newcommand{\cov}{\mathrm{cov}}
\renewcommand{\Pr}{\rm{Prob}}
\newcommand{\DTC}{\mathrm{DTC}}
\newcommand{\rmD}{\mathrm{D}}
\begin{document}

\begin{frontmatter}
	\title{The structure of low-complexity Gibbs measures on product spaces}
	\runtitle{Low-complexity Gibbs measures}

		\author{\fnms{Tim} \snm{Austin}\ead[label=e1]{tim@math.ucla.edu}}
		
		\runauthor{Tim Austin}
		
		\affiliation{University of California, Los Angeles}
		
		\address{UCLA Mathematics Department, Box 951555,\\ Los Angeles, CA 90095-1555, U.S.A.\\\printead{e1}}

\begin{abstract}
	Let $K_1$, \dots, $K_n$ be bounded, complete, separable metric spaces. Let $\l_i$ be a Borel probability measure on $K_i$ for each $i$. Let $f:\prod_i K_i \to \bbR$ be a bounded and continuous potential function, and let
	\[\mu(\d \bf{x})\ \propto\ \rme^{f(\bf{x})}\l_1(\d x_1)\cdots \l_n(\d x_n)\]
	be the associated Gibbs distribution.
	
	At each point $\bf{x} \in \prod_i K_i$, one can define a `discrete gradient' $\nabla f(\bf{x},\,\cdot\,)$ by comparing the values of $f$ at all points which differ from $\bf{x}$ in at most one coordinate.  In case $\prod_i K_i = \{-1,1\}^n \subset \bbR^n$, the discrete gradient $\nabla f(\bf{x},\,\cdot\,)$ is naturally identified with a vector in $\bbR^n$.

This paper shows that a `low-complexity' assumption on $\nabla f$ implies that $\mu$ can be approximated by a mixture of other measures, relatively few in number, and most of them close to product measures in the sense of optimal transport.  This implies also an approximation to the partition function of $f$ in terms of product measures, along the lines of Chatterjee and Dembo's theory of `nonlinear large deviations'.

An important precedent for this work is a result of Eldan in the case $\prod_i K_i = \{-1,1\}^n$.  Eldan's assumption is that the discrete gradients $\nabla f(\bf{x},\,\cdot\,)$ all lie in a subset of $\bbR^n$ that has small Gaussian width.  His proof is based on the careful construction of a diffusion in $\bbR^n$ which starts at the origin and ends with the desired distribution on the subset $\{-1,1\}^n$.  Here our assumption is a more naive covering-number bound on the set of gradients $\{\nabla f(\bf{x},\,\cdot\,):\ \bf{x} \in \prod_i K_i\}$, and our proof relies only on basic inequalities of information theory.  As a result, it is shorter, and applies to Gibbs measures on arbitrary product spaces.
	\end{abstract}

%\tableofcontents

\begin{keyword}[class=MSC]
\kwd[Primary ]{60B99}
\kwd[; secondary ]{60G99}
\kwd{82B20}
\kwd{94A17}
\end{keyword}

\begin{keyword}
\kwd{nonlinear large deviations}
\kwd{Gibbs measures}
\kwd{gradient complexity}
\kwd{dual total correlation}
\kwd{mixtures of product measures}
\end{keyword}

\end{frontmatter}

\thispagestyle{plain}

\section{Introduction}

Let $(K_1,d_{K_1})$, \dots, $(K_n,d_{K_n})$ be nonempty, complete, separable metric spaces, all with diameter at most one.  Let $C_\rm{b}(K_i)$ and $\Pr(K_i)$ denote the spaces of bounded continuous functions and Borel probability measures on $K_i$, respectively, and similarly for other topological spaces.  Let $\|\cdot\|$ denote the uniform norm on any space of real-valued functions.  Let $\l_i \in \Pr(K_i)$ be a fixed reference measure for each $i$. %(Much of our work is easily extended to the case of sigma-finite reference measures.  This requires essential no change in the proofs, but the notation becomes more cumbersome, and I do not know of any applications for this extra generality.)

Let $f:\prod_{i=1}^n K_i \to \bbR$ be a bounded continuous function, and call it the `potential'.  Let
\begin{equation}\label{eq:Gibbs-main}
\mu(\d\bf{x}) := \frac{1}{Z}\rme^{f(\bf{x})}\prod_{i=1}^n\l_i(\d x_i)
\end{equation}
be the resulting Gibbs measure, where $Z$ is the normalizing constant that makes $\mu$ a probability measure.  In the language of statistical mechanics, $Z$ is the `partition function' of $f$.

Inside $C_\rmb(\prod_i K_i)$ lies the vector subspace of functions that have the form
\begin{equation}\label{eq:sum-fn}
\bf{x} \mapsto f_1(x_1) + \dots + f_n(x_n)
\end{equation}
for some $f_1 \in C_\rmb(K_1)$, \dots, $f_n \in C_\rmb(K_n)$.  We call such functions \textbf{additively separable}.  The choice of $f_1$, \dots, $f_n$ representing the function in~\eqref{eq:sum-fn} is unique up additive constants.  If $f$ is the function in~\eqref{eq:sum-fn}, then its Gibbs measure factorizes as
\[\frac{1}{Z}\prod_{i=1}^n (\rme^{f_i(x_i)}\l_i(\d x_i)),\]
so it is a product measure.  In this special case we denote this Gibbs measure by $\xi_f$.  Similarly, if $g \in C_\rmb(K_i)$ for some $i$, then we denote by $\xi_g$ the Gibbs measure on $K_i$ constructed from $g$ and $\l_i$.

Consider again a general $f \in C_\rm{b}(\prod_i K_i)$.  In the main result of this paper, we assume that $f$ has `low complexity' relative to the subspace of additively separable functions, and deduce a structure theorem for $\mu$ in terms of mixtures of product measures.  In case $|K_i| = 2$ for each $i$, theorems of this kind originate in Chatterjee and Dembo's work~\cite{ChaDem16} introducing the theory of `nonlinear large deviations'.  Chatterjee and Dembo's main result gives an approximation to the partition function $Z$ under such an assumption on $f$.  More recently, works by Eldan~\cite{Eld--GWcomp} and Eldan and Gross~\cite{EldGro18} have uncovered the approximate structure of the measure $\mu$ itself, again in case $|K_i|=2$.  Our main theorem fits a similar template to Eldan's, but it applies to general product spaces and its proof is quite different.

To explain the relevant notion of `low complexity', fix a reference element $\ast_i \in K_i$ for each $i$.  Given $\bf{x} \in \prod_i K_i$, $i \in [n]$, and $y \in K_i$, we define
\[\partial_i f(\bf{x},y) := f(x_1,\dots,x_{i-1},y,x_{i+1},\dots,x_n) - f(x_1,\dots,x_{i-1},\ast_i,x_{i+1},\dots,x_n).\]
Thus, we replace the $i^{\rm{th}}$ coordinate of $\bf{x}$ twice, first with $y$ and then with $\ast_i$, and take the difference of the resulting values of $f$. This should be regarded as a discrete analog of the `partial derivative of $f$ in the $i^{\rm{th}}$ coordinate'.  Beware that the definition of $\partial_i f$ depends on the choice of the reference point $\ast_i$.  We suppress this dependence in our notation, but return to this point after the statement of the main theorem below.

We assemble these new functions $\partial_i f$ into the single function
\[\nabla f(\bf{x},\bf{y}) := \sum_{i=1}^n \partial_if(\bf{x},y_i) \qquad \Big(\bf{x},\bf{y} \in \prod_i K_i\Big).\]
For a fixed point $\bf{x} \in \prod_i K_i$, the function $\nabla f(\bf{x},\ \cdot\ )$ is additively separable. We refer to it as the \textbf{discrete gradient} of $f$ at the point $\bf{x}$.

If $f\in C_\rmb(\prod_i K_i)$ has the property that $\nabla f(\bf{x},\,\cdot\,)$ is the same function for every $\bf{x} \in\prod_i K_i$, then an easy exercise shows that $f$ itself is additively separable.  Beyond this case, we can make the weaker assumption that $f$ has relatively few different gradients $\nabla f(\bf{x},\,\cdot\,)$ as $\bf{x}$ varies in $\prod_i K_i$, at least up to small errors. This is the notion of `low complexity' that we need.

The statement of our main theorem involves two other concepts.  The first is a mode of approximation between measures on a product space, provided by an appropriate transportation metric.  Endow the product space $\prod_i K_i$ with the normalized Hamming average of the metrics $d_{K_i}$:
\[d_n(\bf{x},\bf{y}) := \frac{1}{n}\sum_{i=1}^n d_{K_i}(x_i,y_i) \qquad \Big(\bf{x},\bf{y} \in \prod_i K_i\Big).\]
If each $K_i$ is just a finite alphabet with the discrete metric, then $d_n$ is the usual Hamming metric, normalized to have diameter $1$.  Now define the transportation metric over $d_n$ by
\begin{equation}\label{eq:trans}
\ol{d_n}(\mu,\nu) := \inf_\l \int d_n(\bf{x},\bf{y})\,\l(\d\bf{x},\d\bf{y}) \qquad \Big(\mu,\nu \in \Pr\Big(\prod_i K_i\Big)\Big),
\end{equation}
where $\l$ ranges over all couplings of $\mu$ and $\nu$. This is a standard and well-studied construction of a metric on probability measures: see, for instance,~\cite[Section 11.8]{Dud--book}.

The second concept we need is a quantity which measures the multi-variate correlation in a joint distribution on an $n$-fold product space.  There are many such quantities, but the one we need is called `dual total correlation' or `DTC', which goes back to work of Han in information theory~\cite{Han75}.  The definition of DTC is recalled in Subsection~\ref{subs:Han} below.  The recent paper~\cite{Aus--TC+DTC} studies DTC in some depth.  According to the main result of that paper, a small value of $\DTC$ implies that the joint distribution is close in $\ol{d_n}$ to a mixture of relatively few product measures.  The relevance of DTC to the present paper is therefore not surprising, but in fact we do not call on that result from~\cite{Aus--TC+DTC} in our work below.  Rather, the proof of the main theorem in the present paper is relatively self-contained, and happens to yield an upper bound on the DTC of the Gibbs measure~\eqref{eq:Gibbs-main} as a by-product.

Our main theorem decomposes $\mu$ as a mixture in a very concrete way: by partitioning $\prod_i K_i$ and conditioning on the parts.  Given any probability space $(K,\mu)$ and measurable subset $P\subseteq K$ of positive measure, we write $\mu_{|P}$ for the conditioned measure $\mu(\,\cdot\,|\,P)$.

\begin{thm*}[A]
 Let $\cal{P}$ be a partition of $\prod_i K_i$ with the property that
\begin{equation}\label{eq:P-small}
	\|\nabla f(\bf{x},\,\cdot\,) - \nabla f(\bf{y},\,\cdot\,)\| = \sup_{\bf{z}}|\nabla f(\bf{x},\bf{z}) - \nabla f(\bf{y},\bf{z})| < \delta n
	\end{equation}
	whenever $\bf{x},\bf{y}$ lie in the same part of $\cal{P}$. Then:
	\begin{enumerate}
		\item[a)] $\DTC(\mu) < \rmH_\mu(\P) + \delta n$, where $\rmH_\mu(\P)$ is the Shannon entropy of the partition $\P$ according to $\mu$,
		\item[b)] we have
		\[\sum_{P\in \cal{P}}\mu(P)\cdot \int \rmD(\mu_{|P}\,\|\,\xi_{\nabla f(\bf{y},\,\cdot\,)})\ \mu_{|P}(\d \bf{y}) < \rmH_\mu(\cal{P}) + \delta n,\]
		where $\rmD$ denotes Kullback--Leibler divergence, and
		\item[c)] we have
		\[\sum_{P\in \cal{P}}\mu(P)\cdot \int \ol{d_n}(\mu_{|P},\xi_{\nabla f(\bf{y},\,\cdot\,)})\ \mu_{|P}(\d \bf{y}) < \sqrt{\frac{1}{2}\Big(\frac{\rmH_\mu(\cal{P})}{n} + \delta\Big)}.\]
	\end{enumerate}
\end{thm*}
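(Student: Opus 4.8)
The plan is to reduce all three parts of the theorem to a single exact identity and then to control its one error term by an exchangeability argument. Two preliminary observations set this up. First, disintegrating $\mu$ in its $i$th coordinate shows that the conditional law of $x_i$ given the other coordinates equals the single-site Gibbs measure $\xi_{\partial_i f(\bf{x},\,\cdot\,)}$ on $K_i$: the factor of $\rme^{f}$ not involving $x_i$ cancels in the conditional, and $\partial_i f(\bf{x},\,\cdot\,)$ does not itself depend on $x_i$. Hence $\xi_{\nabla f(\bf{x},\,\cdot\,)} = \prod_i \xi_{\partial_i f(\bf{x},\,\cdot\,)}$ is exactly the product of the one-dimensional conditional distributions of $\mu$ at the point $\bf{x}$; I will write $\nu_{\bf{x}}$ for this product measure. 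Second, a routine calculation with the chain rule for relative entropy, together with the definition of $\DTC$, gives the identity $\int \log\frac{\d\mu}{\d\nu_{\bf{x}}}(\bf{x})\,\mu(\d\bf{x}) = -\DTC(\mu)$, which is $\le 0$ since $\DTC(\mu)\ge 0$ by Han's inequality.

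Next I would form the exchangeable pair $(\bf{X},\bf{Y})$: draw $\bf{X}\sim\mu$, then draw $\bf{Y}$ from $\mu$ conditioned on the part of $\cal{P}$ containing $\bf{X}$. Relative to $\mu\otimes\mu$ its law is supported on pairs lying in a common part of $\cal{P}$ and has density there equal to the reciprocal of the $\mu$-mass of that part, so it is symmetric in $(\bf{X},\bf{Y})$. Now using the elementary identity $\rmD(\mu_{|P}\,\|\,\nu) = -\log\mu(P) + \int \log\frac{\d\mu}{\d\nu}\,\d\mu_{|P}$ for arbitrary $\nu$, averaging over $\bf{y}$ within each part and then over parts (so that $\sum_P \mu(P)(-\log\mu(P)) = \rmH_\mu(\cal{P})$), and factoring $\frac{\d\mu}{\d\nu_{\bf{Y}}} = \frac{\d\mu}{\d\nu_{\bf{X}}}\cdot\frac{\d\nu_{\bf{X}}}{\d\nu_{\bf{Y}}}$ before taking expectations, the preliminary identity turns this into the master identity
\[\sum_{P\in\cal{P}}\mu(P)\int \rmD(\mu_{|P}\,\|\,\xi_{\nabla f(\bf{y},\,\cdot\,)})\,\mu_{|P}(\d\bf{y}) \;=\; \rmH_\mu(\cal{P}) \;-\; \DTC(\mu) \;+\; R, \qquad R := \bbE\Big[\log\tfrac{\d\nu_{\bf{X}}}{\d\nu_{\bf{Y}}}(\bf{X})\Big].\]

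The hypothesis \eqref{eq:P-small} enters only in bounding $R$. By symmetry of $(\bf{X},\bf{Y})$ one has $R = \tfrac12\bbE\big[\log\tfrac{\d\nu_{\bf{X}}}{\d\nu_{\bf{Y}}}(\bf{X}) + \log\tfrac{\d\nu_{\bf{Y}}}{\d\nu_{\bf{X}}}(\bf{Y})\big]$; writing each Radon--Nikodym derivative against $\prod_i\l_i$ and recalling that $\nu_{\bf{x}}$ is a product measure whose $i$th coordinate has log-density $\partial_i f(\bf{x},\,\cdot\,)$ minus a normalizing constant, those normalizing constants cancel in this symmetric combination, leaving exactly $\big(\nabla f(\bf{X},\bf{X}) - \nabla f(\bf{Y},\bf{X})\big) + \big(\nabla f(\bf{Y},\bf{Y}) - \nabla f(\bf{X},\bf{Y})\big)$. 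Since $\bf{X}$ and $\bf{Y}$ always lie in one part, each bracket has absolute value $< \delta n$ by \eqref{eq:P-small}, so $|R| < \delta n$. Part (b) is then immediate from the master identity because $\DTC(\mu)\ge 0$, and part (a) is immediate because the left-hand side of the master identity is nonnegative. I expect this cancellation to be the main point requiring care: comparing $\mu_{|P}$ with a single fixed product measure $\xi_{\nabla f(\bf{y},\,\cdot\,)}$ would lose a constant factor coming from the one-dimensional partition functions $\int\rme^{\partial_i f(\bf{y},w)}\l_i(\d w)$, and it is precisely the symmetrization by the exchangeable pair that removes this loss and yields the sharp $\delta n$.

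Finally, part (c) follows from (b) and the general estimate $\ol{d_n}(\eta,\xi) \le \sqrt{\rmD(\eta\,\|\,\xi)/(2n)}$, valid for any product measure $\xi = \prod_i \xi_i$. To prove that estimate I would build a coupling of $\eta$ and $\xi$ coordinate by coordinate: at step $i$, couple the conditional law under $\eta$ of the $i$th coordinate given the first $i-1$ to $\xi_i$ optimally for $d_{K_i}$, and bound the resulting cost by the total variation distance (using that $K_i$ has diameter at most $1$) and hence, by Pinsker's inequality, by $\sqrt{\tfrac12\rmD(\eta_i(\,\cdot\,|\,x_{<i})\,\|\,\xi_i)}$; summing over $i$, using $\frac1n\sum_i\sqrt{a_i}\le\sqrt{\frac1n\sum_i a_i}$ and the chain rule $\sum_i\bbE_\eta\big[\rmD(\eta_i(\,\cdot\,|\,X_{<i})\,\|\,\xi_i)\big] = \rmD(\eta\,\|\,\xi)$ then gives the claim. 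Applying it with $\eta = \mu_{|P}$ and $\xi = \xi_{\nabla f(\bf{y},\,\cdot\,)}$, and then Jensen's inequality for the concave square root against the probability measure $\sum_P\mu(P)\mu_{|P}$, converts the bound in (b) into the bound in (c).
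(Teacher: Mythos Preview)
Your proof is correct and is essentially the same as the paper's: your ``master identity'' is exactly the paper's identity
\[
\DTC(\mu) + \sum_P \mu(P)\int \rmD(\mu_{|P}\,\|\,\xi_{\nabla f(\bf{y},\,\cdot\,)})\,\mu_{|P}(\d\bf{y}) = \rmH_\mu(\P) + \sum_P \mu(P)\iint \big(\nabla f(\bf{y},\bf{y}) - \nabla f(\bf{x},\bf{y})\big)\,\mu_{|P}(\d\bf{y})\,\mu_{|P}(\d \bf{x}),
\]
your preliminary $\int \log\frac{\d\mu}{\d\nu_{\bf{x}}}(\bf{x})\,\mu(\d\bf{x}) = -\DTC(\mu)$ packages together what the paper splits into a tower-property lemma (Lemma~3.1) and the formula $\DTC(\mu) = \int\rmD(\xi_{\nabla f(\bf{x},\,\cdot\,)}\,\|\,\l)\,\mu(\d\bf{x}) - \rmD(\mu\,\|\,\l)$, and your inline derivation of the transportation--entropy bound is Marton's inequality, which the paper quotes. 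The exchangeable-pair framing is a pleasant way to see the cancellation of the single-site normalizing constants (the paper achieves the same cancellation via Gibbs' variational principle and the law of total probability), but note that the symmetrization step is not actually needed for the bound $|R|<\delta n$: since $\bf{X}$ and $\bf{Y}$ have the same marginal, $\bbE[C(\bf{X})-C(\bf{Y})]=0$ already, so $R=\bbE[\nabla f(\bf{X},\bf{X})-\nabla f(\bf{Y},\bf{X})]$ directly, which is the paper's error term verbatim.
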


In this theorem, part (c) follows directly from part (b) using Marton's transportation-entropy inequality (Proposition~\ref{prop:Marton} below) and H\"older's inequality.  We include both of these parts in the statement because both kinds of approximation by product measures have an intrinsic interest and potential applications.  But most of the proof of Theorem A goes towards parts (a) and (b).

The definition of $\nabla f$ depends on the choice of the reference points $\ast_i$.  However, if we write $\ast'_1$, \dots, $\ast'_n$ for an alternative choice of reference points and write $\nabla'f$ for the alternative discrete gradients that result, then these discrete gradients are related by the cocycle equation
\[\nabla'f(\bf{x},\bf{y}) = \nabla f(\bf{x},\bf{y}) - \nabla f(\bf{x},(\ast'_1,\dots,\ast'_n)).\]
As a result, if $\P$ satisfies~\eqref{eq:P-small} for the discrete gradients $\nabla f(\bf{x},\,\cdot\,)$, then it satisfies the analogous bounds for the discrete gradients $\nabla' f(\bf{x},\,\cdot\,)$ with $\delta n$ loosened to $2\delta n$.  For this reason, the particular choice of the reference points $\ast_i$ has little effect on Theorem A.

Theorem A is valuable in case $\rmH_\mu(\cal{P})$ is small compared to $n$.  This is implied if we have enough control on $|\P|$ itself: for instance, if we know that
\begin{equation}\label{eq:bdd-cplx}
\rm{cov}_{\delta n}\Big(\Big\{\nabla f(\bf{x},\,\cdot\,):\ \bf{x} \in \prod_i K_i\Big\},\|\cdot\|\Big) \leq \rme^{\eps n}
\end{equation}
for some small $\eps$, where $\rm{cov}_{\delta n}(\,\cdot\,,\|\cdot\|)$ denotes the smallest cardinality of a covering by sets of $\|\cdot\|$-diameter less than $\delta n$.

\begin{cor*}[A$'$]
	If $f$ satisfies~\eqref{eq:bdd-cplx}, then there are (i) a partition $\cal{P}$ of $\prod_i K_i$ into at most $\rme^{\eps n}$ parts and (ii) a selection of additively separable functions $g_P$ for $P \in \cal{P}$ such that
			\[\sum_{P \in \cal{P}}\mu(P)\cdot \rmD(\mu_{|P}\,\|\,\xi_{g_P}) < (\eps + \delta)n\]
and
		\[\sum_{P \in \cal{P}}\mu(P)\cdot \ol{d_n}(\mu_{|P},\xi_{g_P}) < \sqrt{(\eps + \delta)/2}.\]
		(Informally: `most of the mass of $\mu$ is on conditioned measures $\mu_{|P}$, $P \in \cal{P}$, that are close to products'.)
\end{cor*}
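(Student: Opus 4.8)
The plan is to read Corollary~A$'$ as a packaging of Theorem~A: convert the covering hypothesis~\eqref{eq:bdd-cplx} into a partition $\cal{P}$ of $\prod_i K_i$ of controlled cardinality on which~\eqref{eq:P-small} holds, feed this $\cal{P}$ into Theorem~A, and then replace the fibrewise functions $\nabla f(\bf{y},\,\cdot\,)$ that occur there by a single additively separable representative $g_P$ on each part.

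First I would use~\eqref{eq:bdd-cplx} to fix a covering of the gradient set $\{\nabla f(\bf{x},\,\cdot\,):\ \bf{x}\in\prod_i K_i\}$ by sets $U_1,\dots,U_N$ with $N\le\rme^{\eps n}$, each of $\|\cdot\|$-diameter less than $\delta n$; after discarding empty ones and disjointifying we may take the $U_j$ to partition the gradient set. Pulling this back along $\bf{x}\mapsto\nabla f(\bf{x},\,\cdot\,)$ produces a partition $\cal{P}=\{P_1,\dots,P_N\}$ of $\prod_i K_i$, with $P_j=\{\bf{x}:\nabla f(\bf{x},\,\cdot\,)\in U_j\}$; by construction $\|\nabla f(\bf{x},\,\cdot\,)-\nabla f(\bf{y},\,\cdot\,)\|<\delta n$ whenever $\bf{x},\bf{y}$ lie in a common $P_j$, so $\cal{P}$ satisfies~\eqref{eq:P-small}. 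After discarding the $\mu$-null parts (which changes none of the sums below) we may assume $\mu(P)>0$ for all $P\in\cal{P}$, so that $\rmH_\mu(\cal{P})\le\log|\cal{P}|\le\eps n$. The one point here that calls for care, rather than real work, is that $\cal{P}$ can be taken to consist of Borel sets: the gradient map need not be $\|\cdot\|$-continuous, but since each $K_i$ is separable the function $\bf{x}\mapsto\|\nabla f(\bf{x},\,\cdot\,)-h\|$ is a countable supremum of continuous functions for every fixed $h\in C_\rmb(\prod_i K_i)$, and a routine selection argument then lets one refine the covering above to a Borel one of no larger cardinality and no larger mesh.

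Next I would apply Theorem~A(b) to this $\cal{P}$, obtaining $\sum_{P\in\cal{P}}\mu(P)\int\rmD(\mu_{|P}\,\|\,\xi_{\nabla f(\bf{y},\,\cdot\,)})\,\mu_{|P}(\d\bf{y})<\rmH_\mu(\cal{P})+\delta n\le(\eps+\delta)n$. For each $P$ the inner integral is a $\mu_{|P}$-average of $\bf{y}\mapsto\rmD(\mu_{|P}\,\|\,\xi_{\nabla f(\bf{y},\,\cdot\,)})$, so the set where this integrand is at most the average has positive $\mu_{|P}$-measure and hence meets $P$; pick $\bf{y}_P$ there and put $g_P:=\nabla f(\bf{y}_P,\,\cdot\,)$, which is additively separable. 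Then $\rmD(\mu_{|P}\,\|\,\xi_{g_P})\le\int\rmD(\mu_{|P}\,\|\,\xi_{\nabla f(\bf{y},\,\cdot\,)})\,\mu_{|P}(\d\bf{y})$ for every $P$, and summing against $\mu(P)$ gives the first conclusion, $\sum_{P}\mu(P)\,\rmD(\mu_{|P}\,\|\,\xi_{g_P})<(\eps+\delta)n$.

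Finally the transportation estimate follows from this exactly as part~(c) of Theorem~A follows from part~(b). Each $\xi_{g_P}$ is a product measure, so Marton's transportation-entropy inequality (Proposition~\ref{prop:Marton}) gives $\ol{d_n}(\mu_{|P},\xi_{g_P})\le\sqrt{\rmD(\mu_{|P}\,\|\,\xi_{g_P})/(2n)}$; applying Cauchy--Schwarz to the average over $P$ then yields $\sum_{P}\mu(P)\,\ol{d_n}(\mu_{|P},\xi_{g_P})\le\big(\tfrac{1}{2n}\sum_{P}\mu(P)\,\rmD(\mu_{|P}\,\|\,\xi_{g_P})\big)^{1/2}<\sqrt{(\eps+\delta)/2}$, as required. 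Thus there is no serious obstacle beyond Theorem~A itself: once that is in hand, the corollary is a matter of choosing the partition and the representatives and invoking Marton's inequality, the only mildly delicate ingredient being the measurability of $\cal{P}$.
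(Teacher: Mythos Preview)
Your argument is correct and follows essentially the same route as the paper: pull back a small covering of the gradient set to obtain $\cal{P}$, apply Theorem~A(b), select a representative $\bf{y}_P$ in each cell, and deduce the transportation bound via Marton's inequality. If anything, your version is slightly more careful---you choose $\bf{y}_P$ so that the KL divergence is at most its $\mu_{|P}$-average (which is what is actually needed for the first inequality), whereas the paper writes that $\bf{y}_P$ minimizes $\ol{d_n}(\mu_{|P},\xi_{\nabla f(\bf{y}_P,\,\cdot\,)})$, and you also address the Borel measurability of $\cal{P}$, which the paper leaves implicit.
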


The argument from Theorem A to Corollary A$'$ is very short, but we include it explicitly after the proof of Theorem A in Section~\ref{sec:A-proof}.

The proof of Theorem A brings together four basic results of information theory.  All are well-known up to some routine manipulations, but Section~\ref{sec:princs} lays them out carefully.  Then Section~\ref{sec:A-proof} completes the proof of Theorem A.

Subsequent sections explore some consequences and related results.

First, Section~\ref{sec:mix-terms} gives an approximate description of \emph{which} product measures appear in the mixture promised by Corollary A$'$: see Proposition~\ref{prop:approx-fix-pt}.  The description takes the form of an approximate fixed point equation for those measures in terms of $f$.  It follows almost immediately from Theorem A itself. It is an analog of the main result in~\cite{EldGro18} for the case of the Hamming cube, although that paper provides various finer details and applications that we do not pursue here.

Next, Section~\ref{sec:part-fn-approx} turns Theorem A into an approximation for the normalizing constant $Z$ in~\eqref{eq:Gibbs-main}.  This topic is the original concern of~\cite{ChaDem16}, and re-appears among the consequences of Eldan's main result in~\cite{Eld--GWcomp}.

%Finally, Section~\ref{sec:Gauss} explains an analog of Theorem A in which a Gibbs measure on $\bbR^n$ is compared to an underlying Gaussian measure.  This, too, has a precedent in Eldan's work: see~\cite[Section ??]{Eld--GWcomp}.  However, once again, our methods are quite different from Eldan's.  The result we obtain is similar, although here our main estimate seems to be a slight improvement on his.

\subsection*{Some further comparison with previous works}

The simplest setting for Theorem A is a product of two-point alphabets.  The works of Chatterjee and Dembo~\cite{ChaDem16} and Eldan and Gross~\cite{Eld--GWcomp,EldGro18} are confined to that case.

In~\cite{ChaDem16}, the authors assume that $f$ is defined on the whole of $\bbR^n$, and then restrict it to the product subset $\{0,1\}^n$.  Then they quantify the `complexity' of $f$ in terms of its classical gradient in the sense of calculus, rather than the discrete quantity $\nabla f$ that we discuss above.  Their main result is an estimate on the normalizing constant $Z$, roughly in terms of a variational formula over product measures.  This work has now been generalized to other alphabets by Yan~\cite{Yan--nldp}.  For Yan's generalization, he retains the feature that the alphabets $K_i$ are subsets of some given Banach spaces $V_i$, and that the complexity of $f$ is measured by its Fr\'echet derivative as a function on $\prod_i V_i$.  It should be straightforward to move between this notion of `low complexity' and ours, but we do not explore this further here.

In~\cite{Eld--GWcomp}, Eldan also regards his state space as a subset of $\bbR^n$. He uses the subset $\{-1,1\}^n$, which is more convenient for his proofs.  Like the assumption in Theorem A, Eldan's `low-complexity' assumption applies directly to the discrete gradient $\nabla f$, not its relative from calculus.  But his approach uses the embedding $\{-1,1\}^n \subset \bbR^n$ in another very essential way.  It relies on a diffusion process in $\bbR^n$ which starts at the origin and ends with the desired distribution on $\{-1,1\}^n$.  This approach seems quite different from ours.  One can regard the main contribution of the present paper as an alternative proof of something like Theorem A which (i) is simpler in the special case $K_i = \{-1,1\}$ and (ii) generalizes easily to other product spaces.  However, at some points Eldan's estimates seem to be sharper than ours; we compare them more carefully in Subsection~\ref{subs:Eldan-compar}.  In a subsequent work~\cite{EldGro18} Eldan and Gross have characterized which product measures appear in Eldan's structure theorem in terms of the function $f$; Proposition~\ref{prop:approx-fix-pt} below is similar to their result.

Chatterjee and Dembo's results in~\cite{ChaDem16} were initially motivated by statistical physics or large deviations questions in certain highly symmetric models of random graphs.  Chatterjee gives a careful introduction to this area in his monograph~\cite{Cha--randomgraphsbook}, where Chapter 8 is given to the `nonlinear large deviations approach'.  After applying the basic theory of nonlinear large deviations, these questions about random graphs lead to a class of highly nontrivial variational problems, which are successfully analyzed in~\cite{LubZha17,BhaGanLubZha17}. Some related applications which require more than two-point alphabets are described by Yan in~\cite{Yan--nldp}.  Applications with two-point alphabets are treated again by Eldan and Gross in~\cite{Eld--GWcomp,EldGro18,EldGro--exprndmgraphs}, where they obtain refinements of some of the Chatterjee--Dembo estimates using Eldan's new structural results.  Another application discussed in~\cite{ChaDem16} is to large deviations of the number of arithmetic progressions in a random arithmetic set, and the recent paper~\cite{BhaGanShaZha--APs} gives a more complete analysis of this problem using Eldan's approach.

In this paper we do not investigate whether our new results lead to any further improvements in those applications.  In the case of large deviations of subgraph counts in Erd\H{o}s--R\'enyi random graphs, the recent preprint~\cite{DemCoo--subgraphuppertail}
achieves substantial improvements over the other works cited above by carefully finding and exploiting certain convex sets related to that problem.

Another variation on Chatterjee and Dembo's original partition-function estimates appears in Augeri's recent preprint~\cite{Aug--NLDP}.  She considers an arbitrary compactly supported probability measure $\l$ on $\bbR^n$ and a continuously differentiable potential function $f:\bbR^n\to\bbR$.  By an elegant application of elementary convex analysis, she obtains a new upper bound on the partition function $\log \int \rme^f\,\d\l$ in terms of a covering-number estimate on the range of the derivative $Df$.  From this upper bound she then derives improved estimates in several of the applications of nonlinear large deviations, including again large deviations for cycle counts in Erd\H{o}s--R\'enyi random graphs.

\subsection*{Acknowledgements}

I am grateful to Sourav Chatterjee, Amir Dembo and Ofer Zeitouni for some insightful conversations.  Along with Ronen Eldan and an anonymous referee, they also made valuable suggestions about earlier versions of this paper.

\section{The four principles in the proof of Theorem A}\label{sec:princs}

Our proof of Theorem A combines four basic principles from probability and information theory.  This section recalls these in turn, and the next section assembles them into the proof.

\subsection{A modified chain rule for Kullback--Leibler divergence}\label{subs:chain}

We assume familiarity with the basic properties of Shannon entropy and Kullback--Leibler divergence.  A standard reference in the setting of discrete probability distributions is~\cite[Chapter 2]{CovTho06}.  The KL divergence $\rmD(\mu\,\|\,\g)$ can be defined for any pair of probability measures $\mu$, $\g$ on a general measurable space: it is $+\infty$ unless $\mu \ll \g$, and in that case it is defined by
\[\rmD(\mu\,\|\,\g) := \int \log\frac{\d\mu}{\d\g}\,\d\mu\]
(which may still equal $+\infty$). This generalization and its properties can be found in~\cite[Chapters 2 and 3]{Pin64} or~\cite[Appendix D.3]{DemZei--LDPbook}.

For KL divergence, we need the following modification of the usual chain rule.

\begin{lem}[Modified chain rule]\label{lem:mod-chain}
	For any measurable space $K$, finite measurable partition $\P$ of $K$, and probability measures $\mu$ and $\g$ on $K$, we have
	\begin{equation}\label{eq:mod-chain}
		\rmD(\mu\,\|\,\g) = -\rmH_\mu(\P) + \sum_{P \in \P}\mu(P)\rmD(\mu_{|P}\,\|\,\g).
	\end{equation}
\end{lem}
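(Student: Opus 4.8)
The plan is to reduce everything to the definition of Kullback--Leibler divergence in terms of densities, after first disposing of the degenerate case. So I would begin with the case $\mu \not\ll \g$, where the left-hand side of~\eqref{eq:mod-chain} equals $+\infty$, and check that the right-hand side is $+\infty$ as well: choose a measurable $A$ with $\g(A) = 0 < \mu(A)$; since $\P$ is finite and $\mu(A) = \sum_{P \in \P}\mu(A \cap P)$, some part $P$ has $\mu(A \cap P) > 0$, hence $\mu(P) > 0$ and $\mu_{|P}(A) > 0 = \g(A)$, so $\mu_{|P} \not\ll \g$ and $\rmD(\mu_{|P}\,\|\,\g) = +\infty$; as $\rmH_\mu(\P) \leq \log|\P| < \infty$, the right-hand side is then $+\infty$ too, and the identity holds with both sides infinite.

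Now suppose $\mu \ll \g$ and set $h := \d\mu/\d\g$. For each $P \in \P$ with $\mu(P) > 0$, the conditioned measure $\mu_{|P}$ is again absolutely continuous with respect to $\g$, with Radon--Nikodym density equal to $h/\mu(P)$ on $P$ and to $0$ off $P$. Substituting this into the definition of $\rmD$ and splitting the logarithm gives
\[\rmD(\mu_{|P}\,\|\,\g) \;=\; \frac{1}{\mu(P)}\int_P \log h \,\d\mu \;-\; \log\mu(P).\]
I would then multiply through by $\mu(P)$ and sum over the parts with $\mu(P) > 0$ (parts of zero $\mu$-measure contribute nothing to either side under the convention $0\log 0 = 0$), use additivity of the integral over the finite partition to recognize $\sum_P \int_P \log h\,\d\mu = \int_K \log h\,\d\mu = \rmD(\mu\,\|\,\g)$, and recognize $-\sum_P \mu(P)\log\mu(P) = \rmH_\mu(\P)$. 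This yields $\sum_{P \in \P}\mu(P)\,\rmD(\mu_{|P}\,\|\,\g) = \rmD(\mu\,\|\,\g) + \rmH_\mu(\P)$, which is a rearrangement of~\eqref{eq:mod-chain}.

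The one step that is more than symbol-pushing, and the point I would be most careful about, is justifying that all these integrals are well defined and may be split over $\P$ without running into an $\infty - \infty$. This follows from the standard fact that $t \mapsto t\log t$ is bounded below on $[0,\infty)$, so the negative part $(\log h)^-$ has $\mu$-integral at most $1/\rme$ over \emph{every} measurable set; hence each $\int_P \log h\,\d\mu$ is a well-defined element of $(-\infty,+\infty]$ and the sum over the finitely many parts is unambiguous. As an alternative route one could deduce~\eqref{eq:mod-chain} by applying the ordinary chain rule for KL divergence twice to the pair of probability measures on $\P \times K$ whose marginals on $\P$ both equal $(\mu(P))_{P \in \P}$ and whose disintegrations over $P \in \P$ are $\mu_{|P}$ and $\g$ respectively; but the direct computation above looks shortest.
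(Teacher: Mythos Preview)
Your argument is correct. The route differs from the paper's: the paper invokes the standard chain rule $\rmD(\mu\,\|\,\g) = \rmD([\mu]_\P\,\|\,[\g]_\P) + \sum_P\mu(P)\rmD(\mu_{|P}\,\|\,\g_{|P})$ as a black box, then rewrites $\rmD(\mu_{|P}\,\|\,\g_{|P}) = \rmD(\mu_{|P}\,\|\,\g) + \log\g(P)$ and observes that $\rmD([\mu]_\P\,\|\,[\g]_\P) + \sum_P\mu(P)\log\g(P)$ collapses to $-\rmH_\mu(\P)$. You instead unfold the definition directly via the density $h=\d\mu/\d\g$ and its restrictions $h\mathbf{1}_P/\mu(P)$, which is more elementary (no auxiliary chain rule needed) and lets you be explicit about the degenerate case $\mu\not\ll\g$ and the integrability issue---points the paper leaves implicit. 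The paper's approach is slightly more structural, making visible that the identity is really the usual chain rule plus a change of reference measure from $\g_{|P}$ to $\g$; yours is shorter and self-contained. Your closing remark about deriving the identity by applying the chain rule in both directions to the coupled measures on $\P\times K$ is also valid and gives a third proof.
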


\begin{proof}
	The usual chain rule for KL divergence gives
	\begin{equation}\label{eq:usual-chain}
	\rmD(\mu\,\|\,\g) = \rmD([\mu]_\P\,\|\,[\g]_\P) + \sum_P\mu(P)\rmD(\mu_{|P}\,\|\,\g_{|P}),
	\end{equation}
	where $[\mu]_\P$ denotes the stochastic vector $(\mu(P))_{P\in\P}$. Now observe that
	\begin{multline*}
		\rmD(\mu_{|P}\,\|\,\g_{|P}) = \int_P \log\frac{\d\mu_{|P}}{\d\g_{|P}}\,\d\mu_{|P} \\= \int_P \log \Big(\g(P)\frac{\d\mu_{|P}}{\d\g}\Big)\,\d\mu_{|P} = \rmD(\mu_{|P}\,\|\,\g) + \log \g(P).
	\end{multline*}
	Inserting this into~\eqref{eq:usual-chain}, we are left with the second right-hand term of~\eqref{eq:mod-chain}, together with the quantity
	\begin{align*}
		\rmD([\mu]_\P\,\|\,[\g]_\P) + \sum_P\mu(P) \log\g(P) &= \sum_P\mu(P)\log\frac{\mu(P)}{\g(P)} + \sum_P\mu(P) \log\g(P) \\
		&= - \rmH_\mu(\P).
	\end{align*}
\end{proof}

\subsection{Gibbs' variational principle}\label{subs:Gibbs}

Fix a probability space $(K,\l)$ and let $f:K\to \bbR$ be a bounded measurable function.  As in the Introduction, the \textbf{Gibbs measure} on $K$ associated to $f$ and $\l$ is defined by
\begin{equation}\label{eq:Gibbs}
\mu(\d x) := \frac{\rme^{f(x)}}{\int \rme^f\,\d\l}\l(\d x).
\end{equation}

The importance of Gibbs measures is intimately related to Gibbs' famous variational principle.  Here we use it in the following form.

\begin{prop}\label{prop:Gibbs-with-error}
If $\mu$ is the Gibbs measure associated to $f$, then
\[\rmD(\mu\,\|\,\l) - \int f\,\d\mu = -\log \int \rme^f\,\d\l.\]
	For any other probability measure $\nu$ on $K$, we have
\begin{equation}\label{eq:Gibbs-with-error}
\Big[\rmD(\nu\,\|\,\l) - \int f\,\d\nu\Big] = \Big[\rmD(\mu\,\|\,\l) - \int f\,\d\mu\Big] + \rmD(\nu\,\|\,\mu).
\end{equation}
	\end{prop}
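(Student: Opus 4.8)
The plan is to verify both identities by direct computation with the Radon--Nikodym derivative $\d\mu/\d\l = \rme^f/Z$, where $Z := \int \rme^f\,\d\l$. For the first identity, I would simply write
\[
\rmD(\mu\,\|\,\l) = \int \log\frac{\d\mu}{\d\l}\,\d\mu = \int \bigl(f - \log Z\bigr)\,\d\mu = \int f\,\d\mu - \log Z,
\]
and rearrange. This already gives $\rmD(\mu\,\|\,\l) - \int f\,\d\mu = -\log Z$, which is exactly the claimed formula; the only thing to check is that $\int f\,\d\mu$ is finite, which follows from $f$ being bounded.

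For the second identity, the cleanest route is to expand $\rmD(\nu\,\|\,\mu)$ using the chain of densities. Assuming $\nu \ll \l$ (if not, both sides are $+\infty$ and there is nothing to prove, since $\mu$ and $\l$ are mutually absolutely continuous so $\nu\ll\mu \iff \nu\ll\l$), I would write
\[
\frac{\d\nu}{\d\mu} = \frac{\d\nu}{\d\l}\cdot\frac{\d\l}{\d\mu} = \frac{\d\nu}{\d\l}\cdot Z\,\rme^{-f},
\]
so that
\[
\rmD(\nu\,\|\,\mu) = \int \log\frac{\d\nu}{\d\mu}\,\d\nu = \int \log\frac{\d\nu}{\d\l}\,\d\nu - \int f\,\d\nu + \log Z = \rmD(\nu\,\|\,\l) - \int f\,\d\nu + \log Z.
\]
Combining this with the first identity, $\log Z = -\bigl[\rmD(\mu\,\|\,\l) - \int f\,\d\mu\bigr]$, and rearranging yields precisely~\eqref{eq:Gibbs-with-error}.

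The only genuine subtlety — the ``main obstacle'', such as it is — is bookkeeping about finiteness and absolute continuity so that the additive manipulations of possibly-infinite quantities are legitimate. Since $f$ is bounded, $Z \in (0,\infty)$ and $\rme^{-f}$, $\rme^{f}$ are both bounded, so $\mu$ and $\l$ are mutually absolutely continuous with bounded densities in both directions; hence $\rmD(\nu\,\|\,\mu)$ and $\rmD(\nu\,\|\,\l)$ are simultaneously finite or infinite, and $\int f\,\d\nu$ is always a finite real number. This means that in the finite case every term above is a finite real and the rearrangements are valid, while in the infinite case~\eqref{eq:Gibbs-with-error} reads $+\infty = \text{finite} + \infty$, which holds. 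So once this dichotomy is noted, the proof is just the two density computations above.
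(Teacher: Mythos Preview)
Your proof is correct and is exactly the standard density computation that the paper defers to references (Cover--Thomas, Dembo--Zeitouni, Csisz\'ar) rather than writing out explicitly. If anything, your treatment of the $+\infty$ case and the mutual absolute continuity of $\mu$ and $\l$ is more careful than what the cited sources typically bother to record.
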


%In case $K$ is finite and $\l$ is uniform, the relation $\rmD(\mu\,\|\,\l) = \log|K| - \rmH(\nu)$ lets us convert~\eqref{eq:Gibbs-with-error} into
%\begin{equation}\label{eq:Gibbs-with-error-2}
%\Big[\rmH(\nu) + \int f\,\d\nu\Big] = \Big[\rmH(\mu) + \int f\,\d\mu\Big] - \rmD(\nu\,\|\,\mu).
%\end{equation}
If $K$ is finite and $\l$ is uniform, then~\eqref{eq:Gibbs-with-error} appears within the calculation in~\cite[equations (12.5)--(12.12)]{CovTho06}.  The same calculation gives the general case upon replacing $\rmH(\,\cdot\,)$ with $-\rmD(\,\cdot\,\|\,\l)$ throughout.  In that generality it can be found in the proof of~\cite[Lemma 6.2.13]{DemZei--LDPbook}, and in the application of~\cite[equation~(2.6)]{Csi75} in Section 3, case (A) of that paper.

\subsection{Marton's transportation-entropy inequality}\label{subs:Marton}

A classical inequality of Marton~\cite{Mar86,Mar96} provides the basic link between KL divergence relative to a product measure and the transportation metric~\eqref{eq:trans}.  Most of the proof of Theorem A concerns information theoretic estimates, before Marton's inequality turns these into a transportation-distance bound at the last step.

\begin{prop}[Marton's transportation-entropy inequality]\label{prop:Marton}
	Let $(K_i,d_{K_i})$, $i=1,2,\dots,n$, be complete and separable metric spaces of diameter at most $1$.  If $\mu,\nu \in \Pr(\prod_i K_i)$ and $\nu$ is a product measure then
	\[\ol{d_n}(\mu,\nu) \leq \sqrt{\frac{1}{2n}\rmD(\mu\,\|\,\nu)}.\]
	\qed
\end{prop}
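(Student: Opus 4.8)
The plan is to follow the classical coupling argument of Marton. Write $\nu = \nu_1 \otimes \cdots \otimes \nu_n$. I would first construct an explicit coupling of $\mu$ and $\nu$, built one coordinate at a time, in which adjacent coordinates are joined by a maximal (total-variation-optimal) coupling of the relevant conditional laws; and then convert the resulting transportation cost into a KL divergence via Pinsker's inequality together with the chain rule.

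Concretely, let $\mu_i(\,\cdot\mid x_1,\dots,x_{i-1})$ denote a regular conditional distribution of the $i$-th coordinate under $\mu$ given the first $i-1$ coordinates; these exist because the $K_i$ are Polish. I would build a random vector $(X,Y) = (X_1,Y_1,\dots,X_n,Y_n)$ recursively: having produced $(X_{<i},Y_{<i})$, draw $(X_i,Y_i)$ from a maximal coupling $\g_i^{X_{<i}}$ of $\mu_i(\,\cdot\mid X_{<i})$ and $\nu_i$. Two observations make this work. First, the conditional law of $X_i$ given the whole past is $\mu_i(\,\cdot\mid X_{<i})$, so $X\sim\mu$. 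Second --- and this is where the product structure of $\nu$ is essential --- the second marginal of $\g_i^{X_{<i}}$ is $\nu_i$ no matter what $X_{<i}$ is, so $Y_i$ is independent of the past with law $\nu_i$, and hence $Y\sim\nu$.

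With this coupling in hand I would estimate $\ol{d_n}(\mu,\nu) \le \bbE[d_n(X,Y)] = \frac1n\sum_{i=1}^n \bbE[d_{K_i}(X_i,Y_i)]$. Since each $K_i$ has diameter at most $1$ we have $d_{K_i}(x,y)\le \mathbf{1}_{\{x\ne y\}}$, so the defining property of a maximal coupling gives
\[
\bbE[d_{K_i}(X_i,Y_i)\mid X_{<i}]\ \le\ \|\mu_i(\,\cdot\mid X_{<i}) - \nu_i\|_{\mathrm{TV}}\ \le\ \sqrt{\tfrac12\,\rmD(\mu_i(\,\cdot\mid X_{<i})\,\|\,\nu_i)}
\]
by Pinsker's inequality. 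Taking expectations and applying Jensen's inequality to the concave function $\sqrt{\cdot}$ bounds $\bbE[d_{K_i}(X_i,Y_i)]$ by $\sqrt{\tfrac12 D_i}$, where $D_i := \bbE_{X_{<i}\sim\mu}\rmD(\mu_i(\,\cdot\mid X_{<i})\,\|\,\nu_i)$ is the conditional KL divergence; and the usual chain rule (using that $\nu$ is a product) gives $\sum_i D_i = \rmD(\mu\,\|\,\nu)$. Hence
\[
\ol{d_n}(\mu,\nu)\ \le\ \frac1n\sum_{i=1}^n\sqrt{\tfrac12 D_i}\ \le\ \sqrt{\frac1n\sum_{i=1}^n \tfrac12 D_i}\ =\ \sqrt{\frac{1}{2n}\rmD(\mu\,\|\,\nu)},
\]
where the second inequality is Cauchy--Schwarz in the form $\frac1n\sum_i\sqrt{a_i}\le\sqrt{\frac1n\sum_i a_i}$.

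The only genuine subtlety --- and the main point to be careful about --- is measurability: one must choose the maximal couplings $\g_i^{x_{<i}}$ to depend measurably on $x_{<i}$ so that the recursive recipe really defines a probability measure on the product space, and one needs the regular conditional distributions to exist in the first place. Both are standard consequences of the completeness and separability of the $K_i$ (a measurable-selection argument yields a jointly measurable family of maximal couplings), and I would cite these facts rather than reprove them. Everything else is elementary: Pinsker's inequality is classical, and the chain rule used here specializes the one recalled in Subsection~\ref{subs:chain}.
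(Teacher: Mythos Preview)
Your argument is correct: this is precisely Marton's original coupling proof, and the steps (recursive maximal couplings, Pinsker, chain rule, Cauchy--Schwarz) are all in order. Note, however, that the paper does not actually give a proof of this proposition --- it is stated with a \qed\ and treated as a known result, with a remark that Marton's proof for finite sets carries over unchanged to general Polish spaces and with citations to~\cite{Mar86,Mar96} and the extensions~\cite{Mar98,Dem97,Sam00}. So there is no ``paper's proof'' to compare against; what you have written is essentially the proof the paper is pointing to.
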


Marton's original presentation is only for measures on finite sets, but the proof works without change for products of general complete separable metric spaces. The only requirement is that they all have diameter at most one: this is why we assume that from the beginning of this paper.  (If the diameters are larger but finite, this simply changes the factor of $1/2$ in the inequality.)  See~\cite{Mar96,Mar98,Dem97,Sam00} for various extensions of Proposition~\ref{prop:Marton}, and~\cite{Tal96b} for a Gaussian analog.

\subsection{DTC and a version of Han's inequality}\label{subs:Han}

Given finite-valued random variables $\xi_1$, \dots, $\xi_n$, their \textbf{dual total correlation} or \textbf{DTC} is
\[\DTC(\xi_1,\dots,\xi_n):= \rmH(\xi_1,\dots,\xi_n) - \sum_{i=1}^n \rmH(\xi_i\,|\,\xi_1,\dots,\xi_{i-1},\xi_{i+1},\dots,\xi_n).\]
For more general random variables valued in measurable spaces $K_1$, \dots, $K_n$, their DTC is
\[\DTC(\xi_1,\dots,\xi_n) := \sup_{\P_1,\dots,\P_n}\DTC([\xi_1]_{\P_1},\dots,[\xi_n]_{\P_n}),\]
where the supremum runs over all tuples of finite measurable partitions $\P_i$ of the spaces $K_i$, and $[\xi_i]_{\P_i}$ denotes the quantization of $\xi_i$ by $\P_i$.

DTC is one of several possible ways to quantify the correlation among $n$ random variables.  A classical family of inequalities due to Han~\cite[Theorem 4.1]{Han78} includes the fact that DTC is always non-negative.  DTC is studied carefully in~\cite{Aus--TC+DTC}, together with its simpler relative TC.  In this paper we use DTC via an alternative formula in the setting of Gibbs measures.

Consider the spaces $K_i$ and reference measures $\l_i \in \Pr(K_i)$ as in the Introduction. The next lemma is~\cite[Proposition 6.1, part (b)]{Aus--TC+DTC}, along with the first remark following its proof in that paper.

\begin{lem}\label{lem:DTC-and-Han}
If $\rmD(\mu\,\|\,\l_1\times \cdots \times \l_n) < \infty$ then
\[\DTC(\mu) = \sum_{i=1}^n \int \rmD(\mu_{i,\bf{z}}\,\|\,\l_i)\,\mu_{[n]\setminus i}(\d \bf{z}) - \rmD(\mu\,\|\,\l_1\times \cdots \times \l_n),\]
where
\begin{itemize}
	\item $\mu_{[n]\setminus i}$ is the projection of $\mu$ to $\prod_{j \in [n]\setminus i}K_j$, and
	\item $(\mu_{i,\bf{z}}:\ \bf{z} \in \prod_{j \in [n]\setminus i}K_j)$ is a conditional distribution under $\mu$ of the $i^{\rm{th}}$ coordinate given the other coordinates. \qed
\end{itemize}
\end{lem}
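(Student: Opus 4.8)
The plan is to reduce the claimed identity, in three stages, to a bookkeeping exercise with Shannon entropies: first rewrite the right-hand side so that only ordinary (unconditioned) divergences appear, then verify it when the alphabets are finite, then pass to the general case by a monotone limit over finite partitions.

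For the first stage, observe that $\mu$ disintegrates over its marginal $\mu_{[n]\setminus i}$ as $\mu(\d\bf{x}) = \mu_{[n]\setminus i}(\d\bf{z})\,\mu_{i,\bf{z}}(\d x_i)$, while $\l_i\times\mu_{[n]\setminus i}$ disintegrates over the \emph{same} marginal as $\mu_{[n]\setminus i}(\d\bf{z})\,\l_i(\d x_i)$. The chain rule for relative entropy along this common disintegration gives
\[\int \rmD(\mu_{i,\bf{z}}\,\|\,\l_i)\,\mu_{[n]\setminus i}(\d\bf{z}) = \rmD\big(\mu\,\big\|\,\l_i\times\mu_{[n]\setminus i}\big),\]
so the right-hand side of the lemma equals $\sum_{i}\rmD(\mu\,\|\,\l_i\times\mu_{[n]\setminus i}) - \rmD(\mu\,\|\,\l_1\times\cdots\times\l_n)$. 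A second application of the chain rule, peeling off the $i$-th coordinate, gives $\rmD(\mu\,\|\,\l_i\times\mu_{[n]\setminus i}) = \rmD(\mu\,\|\,\mu_i\times\mu_{[n]\setminus i}) + \rmD(\mu_i\,\|\,\l_i)$, and likewise $\rmD(\mu\,\|\,\l_1\times\cdots\times\l_n) = \rmD(\mu\,\|\,\mu_1\times\cdots\times\mu_n) + \sum_i\rmD(\mu_i\,\|\,\l_i)$; the reference-measure terms cancel, leaving
\[\sum_{i=1}^n \rmD(\mu\,\|\,\mu_i\times\mu_{[n]\setminus i}) - \rmD(\mu\,\|\,\mu_1\times\cdots\times\mu_n),\]
an expression depending only on the joint law $\mu$, as it must since $\DTC(\mu)$ does not see the reference measures.

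For the second stage, suppose every $K_i$ is finite, so that the last display is a finite combination of Shannon entropies. Writing $\rmD(\mu\,\|\,\mu_i\times\mu_{[n]\setminus i}) = \rmH(\xi_i) + \rmH(\xi_{[n]\setminus i}) - \rmH(\xi_1,\dots,\xi_n)$ and $\rmD(\mu\,\|\,\mu_1\times\cdots\times\mu_n) = \sum_i\rmH(\xi_i) - \rmH(\xi_1,\dots,\xi_n)$, the expression collapses to $\sum_i\rmH(\xi_{[n]\setminus i}) - (n-1)\rmH(\xi_1,\dots,\xi_n)$; using $\rmH(\xi_i\,|\,\xi_{[n]\setminus i}) = \rmH(\xi_1,\dots,\xi_n) - \rmH(\xi_{[n]\setminus i})$ this equals $\rmH(\xi_1,\dots,\xi_n) - \sum_i\rmH(\xi_i\,|\,\xi_{[n]\setminus i}) = \DTC(\mu)$. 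All entropies are finite here, so there is no ambiguity in the arithmetic.

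For the third stage, realize each of the three quantities as a supremum over $n$-tuples of finite measurable partitions $\P_1,\dots,\P_n$ of the $K_i$: for $\DTC(\mu)$ this is the definition, while $\rmD(\mu\,\|\,\mu_i\times\mu_{[n]\setminus i})$ and $\rmD(\mu\,\|\,\mu_1\times\cdots\times\mu_n)$ are suprema of their quantizations over the product partitions built from the $\P_j$, since divergence relative to a product reference measure is already saturated by partitions into measurable rectangles. Applying the finite case to the quantized variables $[\xi_i]_{\P_i}$ (with quantized references $[\l_i]_{\P_i}$) validates the identity for each fixed tuple; refining all the $\P_j$ simultaneously, every divergence term increases monotonically to its limit along this one directed family, and the hypothesis $\rmD(\mu\,\|\,\l_1\times\cdots\times\l_n) < \infty$ dominates every term --- indeed $\rmD(\mu\,\|\,\mu_i\times\mu_{[n]\setminus i}) \le \rmD(\mu\,\|\,\mu_1\times\cdots\times\mu_n) \le \rmD(\mu\,\|\,\l_1\times\cdots\times\l_n)$ --- so the limit can be taken term by term and the identity survives. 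The main obstacle is precisely this last stage: one must confirm that the single supremum over partition-tuples in the definition of $\DTC$ genuinely saturates every individual divergence term at once (handled by passing to common refinements), and that finiteness of $\rmD(\mu\,\|\,\l_1\times\cdots\times\l_n)$ is exactly what rules out an $\infty-\infty$ indeterminacy when the $\l_i$ are arbitrary.
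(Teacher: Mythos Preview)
The paper does not give its own proof of this lemma: it is quoted directly from \cite[Proposition~6.1(b)]{Aus--TC+DTC} (together with the remark following that proposition), and the \qed\ in the statement signals that no argument is supplied here. Your three-stage argument is therefore not a variant of the paper's proof but a self-contained reconstruction, and its substance is correct: the chain-rule reductions in Stage~1 and the entropy bookkeeping in Stage~2 are exactly right, and the idea in Stage~3 of passing to the limit along a single refining net of partition tuples is the standard way to handle the general case.

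One step in Stage~3 is a little quick and deserves an explicit sentence. You establish that along a cofinal refining net every divergence on the right-hand side converges monotonically to a finite value, and hence that the finite-level $\DTC$ \emph{converges} along this net to the desired expression. But $\DTC(\mu)$ is defined as the \emph{supremum} over all partition tuples, and a supremum coincides with a directed limit only when the underlying quantity is monotone under refinement. Because your right-hand side carries a minus sign, monotonicity of each separate divergence does not by itself give monotonicity of their combination. The missing fact is that finite-level $\DTC$ \emph{is} monotone under refinement of any $\P_j$; one clean way to see this is the telescoping identity
\[
\DTC(\xi_1,\dots,\xi_n)=\sum_{i=2}^n I\big(\xi_i;\,\xi_1,\dots,\xi_{i-1}\ \big|\ \xi_{i+1},\dots,\xi_n\big),
\]
a sum of conditional mutual informations each of which obeys the data-processing inequality. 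With that remark inserted, your limit along the net is indeed the supremum, and the proof is complete.
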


In the form given by this lemma, the non-negativity of DTC is a sharpening of the usual logarithmic Sobolev inequality for product spaces.  Indeed, this sharpening already appears implicitly inside standard proofs of that logarithmic Sobolev inequality: see the discussion at the end of~\cite[Section 6]{Aus--TC+DTC}.

In case $\mu$ is the Gibbs measure of $f$, Lemma~\ref{lem:DTC-and-Han} turns into a simple expression for $\DTC(\mu)$ in terms of $\nabla f$.  It plays a crucial role later in this paper.

\begin{cor}\label{cor:DTC-of-Gibbs}
The Gibbs measure in~\eqref{eq:Gibbs-main} satisfies
\begin{equation}\label{eq:DTC-of-Gibbs}
\DTC(\mu) = \int \rmD(\xi_{\nabla f(\bf{x},\,\cdot\,)}\,\|\,\l_1\times \cdots \times \l_n)\,\mu(\d \bf{x}) - \rmD(\mu\,\|\,\l_1\times \cdots \times \l_n).
\end{equation}
\end{cor}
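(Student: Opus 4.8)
The plan is to specialize Lemma~\ref{lem:DTC-and-Han} to the Gibbs measure $\mu$ in~\eqref{eq:Gibbs-main}, identify the conditional distributions $\mu_{i,\bf{z}}$ explicitly, and recognize the sum $\sum_i \rmD(\mu_{i,\bf{z}}\,\|\,\l_i)$ as $\rmD(\xi_{\nabla f(\bf{x},\,\cdot\,)}\,\|\,\l_1\times\cdots\times\l_n)$. First I would check that the hypothesis of Lemma~\ref{lem:DTC-and-Han} holds: since $f$ is bounded, $\rme^f/Z$ is bounded above and below, so $\rmD(\mu\,\|\,\l_1\times\cdots\times\l_n) \le \|f\| + \log Z^{-1} < \infty$. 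Hence it suffices to compute the first term on the right-hand side of the formula in that lemma.

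The key computation is the conditional law $\mu_{i,\bf{z}}$ of the $i$-th coordinate given that the other coordinates equal $\bf{z} = (x_j)_{j\ne i}$. From~\eqref{eq:Gibbs-main}, freezing the coordinates outside $i$, the density of $\mu_{i,\bf{z}}$ with respect to $\l_i$ is proportional to $y \mapsto \rme^{f(x_1,\dots,x_{i-1},y,x_{i+1},\dots,x_n)}$. Subtracting the value at $y = \ast_i$ from the exponent changes only the normalizing constant, so this density is also proportional to $y \mapsto \rme^{\partial_i f(\bf{x},y)}$, where $\bf{x}$ is any point with $j$-th coordinate $x_j$ for $j \ne i$ — the value $x_i$ is irrelevant because $\partial_i f(\bf{x},y)$ does not depend on it. In the notation of the Introduction, this means $\mu_{i,\bf{z}} = \xi_{\partial_i f(\bf{x},\,\cdot\,)}$, the Gibbs measure on $K_i$ built from the function $y \mapsto \partial_i f(\bf{x},y)$ and $\l_i$. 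Now I would use the fact that for a product measure the KL divergence is additive over coordinates: since $\nabla f(\bf{x},\,\cdot\,) = \sum_i \partial_i f(\bf{x},y_i)$ is additively separable, its Gibbs measure $\xi_{\nabla f(\bf{x},\,\cdot\,)}$ is exactly the product $\prod_i \xi_{\partial_i f(\bf{x},\,\cdot\,)}$ (as recalled in the Introduction), and therefore
\[
\rmD\big(\xi_{\nabla f(\bf{x},\,\cdot\,)}\,\big\|\,\l_1\times\cdots\times\l_n\big) = \sum_{i=1}^n \rmD\big(\xi_{\partial_i f(\bf{x},\,\cdot\,)}\,\big\|\,\l_i\big) = \sum_{i=1}^n \rmD(\mu_{i,\bf{z}}\,\|\,\l_i).
\]

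It remains to integrate this identity against $\mu$. On the left, $\rmD(\xi_{\nabla f(\bf{x},\,\cdot\,)}\,\|\,\l_1\times\cdots\times\l_n)$ is integrated against $\mu(\d\bf{x})$, giving the first term of~\eqref{eq:DTC-of-Gibbs}. On the right, the $i$-th summand $\rmD(\mu_{i,\bf{z}}\,\|\,\l_i)$ depends only on $\bf{z} = (x_j)_{j\ne i}$, so integrating against $\mu(\d\bf{x})$ is the same as integrating against its marginal $\mu_{[n]\setminus i}(\d\bf{z})$; summing over $i$ recovers exactly the first term in Lemma~\ref{lem:DTC-and-Han}. Subtracting $\rmD(\mu\,\|\,\l_1\times\cdots\times\l_n)$ from both sides completes the proof. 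The main obstacle, such as it is, is purely bookkeeping: being careful that $\partial_i f(\bf{x},y)$ genuinely does not depend on $x_i$ (so the conditional law is well-defined as a function of $\bf{z}$ alone), and invoking the additive-separability-implies-product-Gibbs-measure observation from the Introduction to factor $\xi_{\nabla f(\bf{x},\,\cdot\,)}$; no analytic subtleties arise because boundedness of $f$ makes every divergence in sight finite.
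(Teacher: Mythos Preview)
Your proposal is correct and follows essentially the same route as the paper: identify the conditional law $\mu_{i,\bf{z}}$ as the one-coordinate Gibbs measure $\xi_{\partial_i f(\bf{x},\,\cdot\,)}$, then use additivity of KL divergence for product measures to assemble these into $\rmD(\xi_{\nabla f(\bf{x},\,\cdot\,)}\,\|\,\l_1\times\cdots\times\l_n)$ and match against Lemma~\ref{lem:DTC-and-Han}. Your version is a bit more explicit about checking the finiteness hypothesis and the irrelevance of $x_i$ in $\partial_i f(\bf{x},\,\cdot\,)$, but the argument is the same.
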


\begin{proof}
The additivity of KL divergence for product measures gives
\[\int \rmD(\xi_{\nabla f(\bf{x},\,\cdot\,)}\,\|\,\l_1\times \cdots \times \l_n)\,\mu(\d \bf{x}) = \sum_{i=1}^n \int \rmD(\xi_{\partial_i f(\bf{x},\,\cdot\,)}\,\|\,\l_i)\,\mu(\d \bf{x}).\]
Using this, the right-hand side of~\eqref{eq:DTC-of-Gibbs} matches the formula for $\DTC(\mu)$ in Lemma~\ref{lem:DTC-and-Han}, because $\xi_{\partial_i f(\bf{x},\,\cdot\,)}$ is precisely the conditional distribution of $x_i$ under $\mu$ given the other coordinates $\bf{x}_{[n]\setminus i}$.
	\end{proof}

In the proof of Theorem A, the fact we really need is that the difference on the right-hand side of~\eqref{eq:DTC-of-Gibbs} is non-negative.  However, Theorem A also provides a bound on $\DTC(\mu)$.  This actually offers an alternative route to a decomposition of $\mu$ into near-product measures: the main result of~\cite{Aus--TC+DTC} obtains such a decomposition precisely from a bound on $\DTC$.  We discuss this further at the end of Section~\ref{sec:A-proof}.

\section{Proof of Theorem A}\label{sec:A-proof}

We now return to the setting of Theorem A.

The measure $\mu$ and discrete gradient $\nabla f$ are both constructed from the potential function $f$.  The proof of Theorem A depends on the following link between them.

\begin{lem}\label{lem:integrals-equal}
	We have
	\[\int \nabla f(\bf{x},\bf{x})\,\mu(\d \bf{x}) = \iint \nabla f(\bf{x},\bf{y})\ \xi_{\nabla f(\bf{x},\,\cdot\,)}(\d \bf{y})\,\mu(\d \bf{x}).\]
	\end{lem}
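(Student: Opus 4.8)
The plan is to expand $\nabla f$ coordinatewise on both sides and recognize each resulting term as a single integral against $\mu$, using the disintegration of $\mu$ over one coordinate against the remaining coordinates. First I would write $\nabla f(\bf{x},\bf{y}) = \sum_{i=1}^n \partial_i f(\bf{x},y_i)$ in both integrals. Since $f$ is bounded, each $\partial_i f$ is bounded by $2\|f\|$ and each $\nabla f$ by $2n\|f\|$, so every integral in sight converges absolutely and the finite sum may be pulled outside. The left-hand side then equals $\sum_i \int \partial_i f(\bf{x},x_i)\,\mu(\d\bf{x})$. For the right-hand side, I would use that $\nabla f(\bf{x},\,\cdot\,)$ is additively separable with summands $\partial_i f(\bf{x},\,\cdot\,)$, so by the factorization of the Gibbs measure of an additively separable function recalled in the Introduction,
\[\xi_{\nabla f(\bf{x},\,\cdot\,)} = \prod_{i=1}^n \xi_{\partial_i f(\bf{x},\,\cdot\,)}.\]
Hence the $i$-th term of the right-hand side is $\iint \partial_i f(\bf{x},y_i)\,\xi_{\partial_i f(\bf{x},\,\cdot\,)}(\d y_i)\,\mu(\d\bf{x})$, the inner integral involving only the $i$-th factor of the product.

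The key point is the one already noted in the proof of Corollary~\ref{cor:DTC-of-Gibbs}: the function $\partial_i f(\bf{x},y)$ does not depend on $x_i$, hence depends on $\bf{x}$ only through $\bf{x}_{[n]\setminus i}$, and moreover $\xi_{\partial_i f(\bf{x},\,\cdot\,)}$ is exactly the regular conditional distribution $\mu_{i,\bf{x}_{[n]\setminus i}}$ of the $i$-th coordinate under $\mu$ given the others (such conditional distributions exist because the $K_i$ are Polish). Writing $\bf{z} := \bf{x}_{[n]\setminus i}$, the $i$-th term of the right-hand side therefore equals
\[\iint \partial_i f(\bf{x},y_i)\,\mu_{i,\bf{z}}(\d y_i)\,\mu_{[n]\setminus i}(\d\bf{z}),\]
and since the integrand does not involve $x_i$ I may relabel the dummy variable $y_i$ as $x_i$, obtaining $\iint \partial_i f(\bf{x},x_i)\,\mu_{i,\bf{z}}(\d x_i)\,\mu_{[n]\setminus i}(\d\bf{z})$. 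But $\mu_{[n]\setminus i}(\d\bf{z})\,\mu_{i,\bf{z}}(\d x_i)$ is precisely the disintegration of $\mu$ with respect to the splitting of the coordinates into $\{i\}$ and $[n]\setminus i$, so this last expression equals $\int \partial_i f(\bf{x},x_i)\,\mu(\d\bf{x})$, which matches the $i$-th term of the left-hand side. Summing over $i$ yields the claim.

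I do not expect a genuine obstacle here: the substance of the argument is the identification of $\xi_{\partial_i f(\bf{x},\,\cdot\,)}$ with the conditional law $\mu_{i,\bf{z}}$ (already established in the proof of Corollary~\ref{cor:DTC-of-Gibbs}), the observation that $\partial_i f(\bf{x},y)$ is free of $x_i$ so that the inner $y_i$-integral can be folded back into $\mu$ by Fubini, and the routine boundedness bookkeeping that keeps all integrals finite.
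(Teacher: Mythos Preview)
Your proposal is correct and follows essentially the same route as the paper: reduce to each coordinate $i$, observe that $\partial_i f(\bf{x},\cdot)$ depends only on $\bf{x}_{[n]\setminus i}$ and that $\xi_{\partial_i f(\bf{x},\cdot)}$ is the conditional law of $x_i$ under $\mu$ given the remaining coordinates, and then collapse the iterated integral back to $\int \partial_i f(\bf{x},x_i)\,\mu(\d\bf{x})$. The paper phrases this last collapse as the tower property of conditional expectation, whereas you phrase it via disintegration and relabelling, but the content is identical.
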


\begin{proof}
	It suffices to prove that
	\begin{equation}\label{eq:integrals-equal-basic}
	\int \partial_if(\bf{x},x_i)\,\mu(\d \bf{x}) = \iint \partial_i f(\bf{x},y)\ \xi_{\partial_if(\bf{x},\,\cdot\,)}(\d y)\,\mu(\d \bf{x})
	\end{equation}
for each $i\in [n]$, for then we can just sum over $i$. The function $\partial_if(\bf{x},\,\cdot\,)$ depends only on $\bf{x}_{[n]\setminus i}$, and $\xi_{\partial_if(\bf{x},\,\cdot\,)}$ is the conditional distribution of $x_i$ under $\mu$ given the other coordinates $\bf{x}_{[n]\setminus i}$. Therefore
	\[\int \partial_i f(\bf{x},y)\ \xi_{\partial_if(\bf{x},\,\cdot\,)}(\d y) = E_\mu\big[\partial_if(\bf{x},x_i)\,\big|\,\bf{x}_{[n]\setminus i}\big],\]
	and~\eqref{eq:integrals-equal-basic} is the tower property of iterated conditional expectations.
\end{proof}

%\begin{rmk}
%	Lemma~\ref{lem:integrals-equal} has a natural restatement in terms of the Glauber dynamics (cite Norris? McKay?): it is simply the assertion that
%	\[\int LF\,\d\mu = 0,\]
%	where $L$ is the generator of those dynamics and $F(\bf{x}) = \nabla f(\bf{x},\bf{x})$.  As in the proof of the lemma, this is because each term $\partial_i f(\bf{x})(x_i)$ depends on $x_i$ only through the second coordinate. is the quantity....????  [[ YOU SURE ABOUT THIS? FIGURE IT OUT! ]]  [[ NO! I THINK THIS IS WRONG!  THE POINT IS THAT YOU HAVE SEPARATE DYNAMICS FOR EACH COORDINATE, AND YOU'RE INVARIANT UNDER ALL OF THEM!!!  SO USEFUL ANALOGY TO HAVE IN YOUR POCKET FOR THE PART OF THE SURVEY ABOUT CONTINUOUS ANALOGS OF THIS, BUT NOT SOMETHING TO PUT HERE..... ]]
%	\end{rmk}

To prove Theorem A, we must bound both $\DTC(\mu)$ and the expression
\begin{equation}\label{eq:ave-trans}
\sum_P\mu(P)\int \rmD(\mu_{|P}\,\|\,\xi_{\nabla f(\bf{y},\,\cdot\,)})\,\mu_{|P}(\d\bf{y}).
\end{equation}
The proof focuses on this expression, but also yields a bound on $\DTC(\mu)$ as a by-product.  This feature of our work bears a curious resemblance to another part of Eldan's paper~\cite{Eld--GWcomp}.  For a Gibbs measure $\mu$ defined relative to a standard Gaussian distribution on $\bbR^n$, rather than on a hypercube,~\cite[Theorem 4]{Eld--GWcomp} gives an upper bound on the deficit in the Gaussian logarithmic Sobolev inequality satisfied by $\mu$ in terms of the gradient complexity (measured using Gaussian width) of its potential function. In view of the relationship between $\DTC$ and deficits in logarithmic Sobolev inequalities, already remarked following Lemma~\ref{lem:DTC-and-Han} above, our bound on $\DTC(\mu)$ (Theorem A part (a)) could be an analog of that deficit bound in~\cite{Eld--GWcomp}.  We do not explore this connection further here.

To lighten notation during the rest of this section, let us abbreviate
\[\rmD\Big(\nu\,\Big\|\,\prod_{i \in S}\l_i\Big) \quad \hbox{to} \quad \rmD(\nu)\]
whenever $S \subseteq [n]$ and $\nu \in \Pr(\prod_{i\in S} K_i)$.

Before the formal proof, let us give an informal sketch highlighting the relevance of Lemma~\ref{lem:integrals-equal}.  In this sketch, we assume that $K_i = \{0,1\}$ for each $i$.  In this case each discrete gradient $\nabla f(\bf{x},\,\cdot\,)$ may be identified with a vector in $\bbR^n$: its $i^{\rm{th}}$ entry is
\begin{equation}\label{eq:loc-grad-pm}
f(x_1,x_2,\dots,x_{i-1},1,x_{i+1},\dots,x_n) - f(x_1,x_2,\dots,x_{i-1},0,x_{i+1},\dots,x_n).
\end{equation}
Conversely, any vector $\bf{u} \in \bbR^n$ defines a linear function on $\{0,1\}^n$ using the Euclidean inner product: $\bf{u}(\bf{y}) := \sum_i u_iy_i$. If $\bf{u}$ is the vector given by~\eqref{eq:loc-grad-pm}, then $\bf{u}(\bf{y})$ agrees with our earlier definition of $\nabla f(\bf{x},\bf{y})$ if we choose the reference points $\ast_i = 0$ for each $i$.

To simplify this sketch further, let us also imagine that the discrete gradient $\nabla f(\bf{x},\,\cdot\,)$ takes only two distinct values in $\bbR^n$, say $\bf{u}$ and $\bf{v}$. (This is really a fantasy: a short exercise shows that, if $f$ is not linear itself, then its discrete gradient function takes at least four distinct values.  But the story is simplest if we pretend there are just two.)

Under these assumptions, we take the partition $\cal{P}$ to consist of
\[P := \{\bf{x}:\ \nabla f(\bf{x},\,\cdot\,) = \bf{u}\} \quad \hbox{and}\quad P^\rm{c} := \{\bf{x}:\ \nabla f(\bf{x},\,\cdot\,) = \bf{v}\}.\]
Assume further that neither of the values $\mu(P)$, $\mu(P^\rm{c})$ is close to zero.  (If one of them is close to zero, then a slightly degenerate version of the ensuing discussion applies.)  So we need to show that $\rmD(\mu_{|P}\,\|\,\xi_{\bf{u}})$ and $\rmD(\mu_{|P^{\rm{c}}}\,\|\,\xi_{\bf{v}})$ are both small relative to $n$.

Here are the steps in the proof:
\begin{itemize}
\item Gibbs' variational principle lets us interpret this requirement another way: $\mu_{|P}$ must come close to saturating the variational inequality that characterizes the product measure $\xi_{\bf{u}}$, meaning that
	\begin{equation}\label{eq:var-gap}
\rmD(\mu_{|P}) - \int \bf{u}\ \d\mu_{|P} \quad \hbox{is not much larger than}\quad \rmD(\xi_{\bf{u}}) - \int \bf{u}\ \d\xi_{\bf{u}}
\end{equation}
relative to $n$, and similarly when comparing $\mu_{|P^{\rm{c}}}$ with $\xi_{\bf{v}}$.
\item For each of $\mu_{|P}$ and $\mu_{|P^\rm{c}}$ separately, we have no clear way to control the gap in~\eqref{eq:var-gap}.  But we can control the average of those two gaps over $P$ and $P^{\rm{c}}$, keeping in mind that $\nabla f(\bf{x},\,\cdot\,)$ equals $\bf{u}$ on $P$ and $\bf{v}$ on $P^{\rm{c}}$ respectively.  By Lemma~\ref{lem:mod-chain}, the average of the KL divergences is
\[\mu(P)\cdot \rmD(\mu_{|P}) + \mu(P^\rm{c})\cdot \rmD(\mu_{|P^\rm{c}}) = \rmD(\mu) + \rmH_\mu(\P),\]
and by Corollary~\ref{cor:DTC-of-Gibbs} this is equal to
\begin{align*}
&\Big[\int\rmD(\xi_{\nabla f(\bf{x},\,\cdot\,)})\,\mu(\d\bf{x}) - \DTC(\mu)\Big] + \rmH_\mu(\P)\\
&= \mu(P)\cdot \rmD(\xi_{\bf{u}}) + \mu(P^{\rm{c}})\cdot \rmD(\xi_{\bf{v}}) - \DTC(\mu) + \rmH_\mu(\P).
\end{align*}
The average of the required integrals is
\begin{align*}
&\mu(P) \int \bf{u}\ \d\mu_{|P} + \mu(P^\rm{c})\int \bf{v}\ \d\mu_{|P^\rm{c}}\\ &=\int_P \bf{u}\ \d\mu + \int_{P^{\rm{c}}} \bf{v}\ \d\mu\\ &= \int \nabla f(\bf{x},\bf{x})\ \mu(\d\bf{x})\\
&= \iint \nabla f(\bf{x},\bf{y})\ \xi_{\nabla f(\bf{x},\,\cdot\,)}(\d\bf{y})\, \mu(\d\bf{x})\\
&= \mu(P) \int \bf{u}(\bf{y})\ \xi_{\bf{u}}(\d\bf{y}) + \mu(P^\rm{c})\int \bf{v}(\bf{y})\ \xi_{\bf{v}}(\d\bf{y}).
\end{align*}
The equality of the third and fourth lines here is the crucial appearance of Lemma~\ref{lem:integrals-equal}.  Combining these calculations of averages, we arrive at
\begin{align}\label{eq:pre-Dem-id}
&\mu(P)\cdot \rmD(\mu_{|P}\,\|\,\xi_{\bf{u}}) + \mu(P^\rm{c})\cdot \rmD(\mu_{|P^\rm{c}}\,\|\,\xi_{\bf{v}}) \nonumber\\
&= \mu(P)\Big[\rmD(\mu_{|P}) - \int \bf{u}\ \d\mu_{|P}\Big] -\mu(P)\Big[\rmD(\xi_{\bf{u}}) - \int \bf{u}\ \d\xi_{\bf{u}}\Big] \nonumber\\
&\qquad + \mu(P^\rm{c})\Big[\rmD(\mu_{|P^\rm{c}}) - \int \bf{v}\ \d\mu_{|P^\rm{c}}\Big] - \mu(P^\rm{c})\Big[\rmD(\xi_{\bf{v}}) - \int \bf{v}\ \d\xi_{\bf{v}}\Big] \nonumber\\
&= -\DTC(\mu) + \rmH_\mu(\P).
\end{align}
\item Since $\DTC$ is non-negative, the last line above is at most $\rmH_\mu(\P) \leq \log|\P| = \log 2$.  This is indeed small compared to $n$, and hence so are both $\rmD(\mu_{|P}\,\|\,\xi_{\bf{u}})$ and $\rmD(\mu_{|P^{\rm{c}}}\,\|\,\xi_{\bf{v}})$.  Moreover, the last line above cannot be negative, since it is a positive combination of differences in the variational principle. So we also find that $\DTC(\mu) \leq \log 2$.
\end{itemize}

Now we give the careful proof in full.

\begin{proof}[Proof of Theorem A]
Start by considering a single $P\in \P$.  For any additively separable function $g$, Proposition~\ref{prop:Gibbs-with-error} gives
\[	\rmD(\mu_{|P}\,\|\,\xi_g) = \Big[\rmD(\mu_{|P}) - \int g\,\d\mu_{|P}\Big] - \Big[\rmD(\xi_g) - \int g\,\d\xi_g\Big].\]
Let us average this identity over $g = \nabla f(\bf{x},\,\cdot\,)$ where $\bf{x}\sim \mu_{|P}$.  The result is
\begin{align*}
	&\int \rmD(\mu_{|P}\,\|\,\xi_{\nabla f(\bf{x},\,\cdot\,)})\,\mu_{|P}(\d \bf{x}) \nonumber \\
	&=\rmD(\mu_{|P}) - \iint \nabla f(\bf{x},\bf{y})\ \mu_{|P}(\d \bf{y})\,\mu_{|P}(\d \bf{x}) \\
	&\qquad -\int\rmD(\xi_{\nabla f(\bf{x},\,\cdot\,)})\,\mu_{|P}(\d \bf{x}) + \iint \nabla f(\bf{x},\bf{y})\ \xi_{\nabla f(\bf{x},\,\cdot\,)}(\d\bf{y})\,\mu_{|P}(\d \bf{x}).
\end{align*}
Now we average this equality over $P \in \cal{P}$ with the weights $\mu(P)$.  The third right-hand term simplifies according to the law of total probability:
\[\sum_P\mu(P)\int\rmD(\xi_{\nabla f(\bf{x},\,\cdot\,)})\,\mu_{|P}(\d \bf{x}) = \int\rmD(\xi_{\nabla f(\bf{x},\,\cdot\,)})\,\mu(\d \bf{x}).\]
The fourth right-hand term simplifies similarly. After these simplifications we are left with
	\begin{align}\label{eq:pre-B}
	&\sum_P\mu(P)\int \rmD(\mu_{|P}\,\|\,\xi_{\nabla f(\bf{y},\,\cdot\,)})\,\mu_{|P}(\d\bf{y}) \nonumber \\
	&= \sum_P\mu(P)\cdot\rmD(\mu_{|P}) - \sum_P\mu(P)\iint \nabla f(\bf{x},\bf{y})\,\mu_{|P}(\d\bf{y})\,\mu_{|P}(\d\bf{x}) \nonumber\\
	&\quad -\int \rmD(\xi_{\nabla f(\bf{x},\,\cdot\,)})\,\mu(\d\bf{x}) + \iint \nabla f(\bf{x},\bf{y})\,\xi_{\nabla f(\bf{x},\,\cdot\,)}(\d \bf{y})\,\mu(\d\bf{x}).
\end{align}

Next we re-write the right-hand side of~\eqref{eq:pre-B} by considering separately (i) the KL-divergence terms and (ii) the double-integral terms:
\begin{itemize}
\item[(i)] By Lemma~\ref{lem:mod-chain} and Corollary~\ref{cor:DTC-of-Gibbs}, we have
\begin{multline*}
\sum_P\mu(P)\cdot\rmD(\mu_{|P}) - \int \rmD(\xi_{\nabla f(\bf{x},\,\cdot\,)})\,\mu(\d\bf{x})\\ = \big[\rmD(\mu) + \rmH_\mu(\P)\big] - \big[\rmD(\mu) + \DTC(\mu)\big] = \rmH_\mu(\P) - \DTC(\mu).
\end{multline*}

\item[(ii)] Using Lemma~\ref{lem:integrals-equal} to substitute for the second double integral in~\eqref{eq:pre-B}, and then using the law of total probability, the difference of those double-integral terms is equal to
\begin{multline*}
\sum_P\mu(P)\Big[-\iint \nabla f(\bf{x},\bf{y})\,\mu_{|P}(\d\bf{y})\,\mu_{|P}(\d\bf{x}) + \int \nabla f(\bf{y},\bf{y})\,\mu_{|P}(\d\bf{y})\Big]\\
= \sum_P\mu(P)\iint \big(\nabla f(\bf{y},\bf{y}) - \nabla f(\bf{x},\bf{y})\big)\,\mu_{|P}(\d\bf{y})\,\mu_{|P}(\d\bf{x}).
\end{multline*}
\end{itemize}

Inserting these calculations into~\eqref{eq:pre-B} and re-arranging slightly, we arrive at the identity
\begin{multline}\label{eq:Dem-id}
\DTC(\mu) + \sum_P \mu(P)\int \rmD(\mu_{|P}\,\|\,\xi_{\nabla f(\bf{y},\,\cdot\,)})\,\mu_{|P}(\d\bf{y}) \\ = \rmH_\mu(\P) + \sum_P \mu(P)\iint \big(\nabla f(\bf{y},\bf{y}) - \nabla f(\bf{x},\bf{y})\big)\,\mu_{|P}(\d\bf{y})\,\mu_{|P}(\d \bf{x}).
\end{multline}
This identity generalizes equation~\eqref{eq:pre-Dem-id} in the proof-sketch above.

By our assumption~\eqref{eq:P-small}, we have
\begin{equation*}
|\nabla f(\bf{y},\bf{y}) - \nabla f(\bf{x},\bf{y})| < \delta n
\end{equation*}
whenever $\bf{x}$ and $\bf{y}$ lie in the same cell of $\P$. This implies that the average of double integrals on the right-hand side of~\eqref{eq:Dem-id} is less than $\delta n$, and so
\begin{equation}\label{eq:pre-id}
	\DTC(\mu) + \sum_P\mu(P)\int \rmD(\mu_{|P}\,\|\,\xi_{\nabla f(\bf{y},\,\cdot\,)})\,\mu_{|P}(\d\bf{y}) < \rmH_\mu(\P) + \delta n.
\end{equation}
	Both terms on the left-hand side of~\eqref{eq:pre-id} are non-negative, so conclusions (a) and (b) of Theorem A follow immediately. Conclusion (b) implies conclusion (c) using Marton's inequality (Proposition~\ref{prop:Marton}) and H\"older's inequality:
\begin{align*}
	&\sum_P\mu(P)\int \ol{d_n}(\mu_{|P},\xi_{\nabla f(\bf{y},\,\cdot\,)})\,\mu_{|P}(\d\bf{y}) \\ &\leq \sum_P\mu(P)\int \sqrt{\frac{1}{2n}\rmD(\mu_{|P}\,\|\,\xi_{\nabla f(\bf{y},\,\cdot\,)})}\,\mu_{|P}(\d\bf{y}) \\
	&\leq \sqrt{\sum_P\mu(P)\int \frac{1}{2n}\rmD(\mu_{|P}\,\|\,\xi_{\nabla f(\bf{y},\,\cdot\,)})\,\mu_{|P}(\d\bf{y})}.
\end{align*}
	\end{proof}

The proof above is quite insensitive to the structure of the spaces $K_i$.  When $K_i = \{-1,1\}$, it yields similar results to those obtained from Eldan's diffusion approach.  But one can imagine `even smaller' marginal spaces:
	
\begin{ques}
	Can the structural results be improved when $K_i = \{-1,1\}$ and the reference measures $\l_i$ are highly biased, for instance when
	\[\l_i\{1\} = p \ll 1 \quad \hbox{for each}\ i?\]
	\end{ques}

Next let us fill in the proof of Corollary A$'$ from Theorem A.

\begin{proof}[Proof of Corollary A$'$ from Theorem A]
	Let $\Q$ be a partition of
\[\Big\{\nabla f(\bf{x},\,\cdot\,):\ \bf{x} \in \prod_i K_i\Big\}\]
into sets of $\|\cdot\|$-diameter less than $\delta n$, and let $\P$ be the pullback of $\Q$ under the map $\bf{x}\mapsto \nabla f(\bf{x},\,\cdot\,)$. Now pick an element $\bf{y}_P$ in each $P\in \P$ which minimizes $\ol{d_n}(\mu_{|P},\xi_{\nabla f(\bf{y}_P,\,\cdot\,)})$, and set $g_P := \nabla f(\bf{y}_P,\,\cdot\,)$.  The first desired inequality follows from part (b) of Theorem A, and then the second follows by the same use of Proposition~\ref{prop:Marton} as above.
\end{proof}

\subsection{Possible variations on Theorem A}

The proof of Theorem A pivots around the identity~\eqref{eq:Dem-id}.  As Amir Dembo has emphasized to me, this identity may have other valuable consequences.  The double integral on the right seems to deserve particular attention.  In the proof above we simply use a uniform bound on the integrand $\nabla f(\bf{y},\bf{y}) - \nabla f(\bf{x},\bf{y})$.  Are there other ways to bound this double integral, perhaps by exploiting further the special structure of the Gibbs measure $\mu$?  The reward could be a result similar to Theorem A but with weaker hypotheses.

In a separate direction, one could try replacing the use of Marton's inequality (Proposition~\ref{prop:Marton}) in Theorem A with a different transportation-entropy inequality.  In~\cite{Dem97}, Dembo proves several analogs of Proposition~\ref{prop:Marton} that replace $\ol{d_n}$ with various alternative transportation-like quantities.  Those analogs then recover some powerful variations on product-space measure concentration that were previously discovered and used by Talagrand~\cite{Tal95}.  It might be worth exploring a version of Theorem A part (c) for one of those alternative transportation-like quantities.

Further possibilities arise if we know more about the coordinate spaces $K_i$.  We have already mentioned the case of two-point spaces in connection with the foundational papers~\cite{ChaDem16} and~\cite{Eld--GWcomp}, but other special examples are also worth considering. For example, if $K_i = [0,1]$ for each $i$ and if $f:[0,1]^n\to\bbR$ is differentiable, then we can imagine using its gradient $Df$ in the usual sense of calculus as a substitute for our discrete gradient $\nabla f$. Regarding $Df$ as a function from $[0,1]^n$ to $\bbR^n$, we can still measure its `complexity' in terms of covering numbers, and seek a description of the corresponding Gibbs measures $\mu$ if this `complexity' is small enough. Some of the steps above should adapt with little change, but a few sticking points stand out.  Perhaps most important is Lemma~\ref{lem:integrals-equal}, which depends on recognizing $\xi_{\partial_i(\bf{x}\,\,\cdot\,)}$ as the conditional distribution of $x_i$ under $\mu$ given the other coordinates $\bf{x}_{[n]\setminus i}$.  This identification does not carry over simply to a more `localized' notion of gradient than $\nabla f$.  As a result, Lemma~\ref{lem:integrals-equal} would have to be replaced with a different calculation, probably involving some extra terms whose control requires new methods or assumptions (perhaps on the second derivatives of $f$, by analogy with some of the estimates in~\cite{ChaDem16}).

\subsection{An alternative approach using the DTC bound}

We have seen that conclusion (a) of Theorem A emerges naturally during the course of proving conclusion (b), and that conclusion (b) implies conclusion (c).  However, according to~\cite[Theorem A]{Aus--TC+DTC}, conclusion (a) by itself implies something like conclusion (c).  To be precise, if $\mu$ is a probability measure on a product space $\prod_i K_i$ for which $\DTC(\mu) \leq r^3 n$, then~\cite[Theorem A]{Aus--TC+DTC} asserts that $\mu$ can be a represented as a mixture
\[\mu = \int_L \mu_y\,\nu(\d y)\]
such that (i) the mutual information in the mixture is at most $\DTC(\mu)$ and (ii) there is a measurable family $(\xi_y:\ y\in L)$ of product measures on $\prod_i K_i$ satisfying
\[\int_L \ol{d_n}(\mu_y,\xi_y)\,\nu(\d y) < 2r.\]

In our setting, if we know that
\[\rmH_\mu(\P) + \delta n \leq r^3 n,\]
then this gives such a mixture representation of $\mu$ with mutual information at most $\rmH_\mu(\P) + \delta n$.  This is a softer route with a similar conclusion to Theorem A part (c).  It gives only a mixture with controlled mutual information, rather than a controlled number of terms, but one could then obtain the latter using~\cite[Theorem 9.5]{Aus--WP}.  However, this approach gives weaker estimates than the proof of Theorem A above, which exploits more directly the special nature of our Gibbs measures.

\subsection{Comparison with Eldan's main structure theorem}\label{subs:Eldan-compar}

In this subsection $C$ stands for several universal constants which are not computed explicitly.  The value of $C$ may change from one appearance to the next.

In~\cite{Eld--GWcomp}, Eldan considers a Gibbs measure $\mu$ as in~\eqref{eq:Gibbs-main} on the space $\{-1,1\}^n$. In this case, each discrete gradient $\nabla f(\bf{x},\,\cdot\,)$ may be identified with a vector in $\bbR^n$: this is similar to~\eqref{eq:loc-grad-pm} except using $1$ and $-1$ instead of $1$ and $0$. (To be precise, this differs from Eldan's convention by a factor of $1/2$, which affects a few of the constants below.)  Eldan measures the `complexity' of $f$ relative to linear functions by the Gaussian width of the set of its discrete gradients:
\[\cal{D}(f) := \rm{GW}\big(\big\{\nabla f(\bf{x},\,\cdot\,):\ \bf{x} \in \{-1,1\}^n\big\} \cup \{\bf{0}\}\big).\]
This quantity is called the `Gaussian-width gradient complexity' of $f$.  Eldan's main structure theorem~\cite[Theorem 3]{Eld--GWcomp} represents a Gibbs measure $\mu$ as a mixture of other measures which (i) are mostly close to product measures in $\ol{d_n}$ and (ii) on average carry almost as much entropy as $\mu$ itself, where the quality of these estimates depends on $\cal{D}(f)$.

Let us write $\g := \cal{D}(f)/n$ for convenience.  Specifically, after fixing $\eps > 0$ and also picking an auxiliary parameter $\a > 1$,~\cite[Theorem 3]{Eld--GWcomp} provides a mixture
\[\mu = \int \mu_\theta\,m(\d\theta)\]
over some space of parameters $\theta$ such that
\begin{itemize}
	\item[i)] $\rmH(\mu) - \int \rmH(\mu_\theta)\,m(\d\theta) < C\eps n$ and
	\item[ii)] there is a subset $\Theta$ of values of $\theta$ such that $m(\Theta) > 1 - \frac{1}{n} - \frac{1}{\a}$ and such that every $\theta \in \Theta$ satisfies
	\[\ol{d_n}(\mu_\theta,\xi_{\bf{v}_\theta}) \leq C\sqrt{\a\g/\eps}\]
	for some product measure $\xi_{\bf{v}_\theta}$.
\end{itemize}

Eldan's theorem gives other conclusions as well, such as bounds on the vectors $\bf{v}_\theta$.  The ingredients in Eldan's mixture are `tilts' of $\mu$, whereas in our work they are the conditioned measures $\mu_{|P}$.  We do not explore these features further here, although the special structure of tilts does seem to give an advantage in some of Eldan's applications of his method (particularly the proof of~\cite[Theorem 1]{Eld--GWcomp}).

By Sudakov's minoration~\cite[Theorem 3.18]{LedTal--book} and the inequality $\|\cdot\|_1 \leq \sqrt{n}\|\cdot\|_2$, the covering numbers of a nonempty subset $K \subseteq \bbR^n$ are related to Gaussian width according to
\[\delta \sqrt{n}\sqrt{\log \cov_{\delta n}(K,\|\cdot\|_1)} \leq C\cdot  \rm{GW}(K) \quad \forall \delta > 0. \]
Also, our identification of each discrete gradient with a vector in $\bbR^n$ satisfies $\|\nabla f(\bf{x},\,\cdot\,)\| \leq \|\nabla f(\bf{x},\,\cdot\,)\|_1$, where $\|\,\cdot\,\|$ denotes the uniform norm for functions on $\{-1,1\}^n$ as previously.  Therefore, with $\g$ as above, we have
\begin{equation}\label{eq:GW-and-logcov}
\log \cov_{\delta n}\big(\{\nabla f(\bf{x},\,\cdot\,):\ \bf{x} \in \{-1,1\}^n\},\|\cdot\|\big) \leq C\frac{\cal{D}(f)^2}{\delta^2n} = C\frac{\g^2}{\delta^2}n
\end{equation}
for all $\delta > 0$. For a given $\delta$, this gives~\eqref{eq:bdd-cplx} with $\eps = \g^2/\delta^2$ up to a multiplicative constant.  Thus, assuming a bound on $\g$ does not force a particular choice of $\eps$ and $\delta$ in~\eqref{eq:bdd-cplx}, but sets up a trade-off between them.

After making a choice of $\delta$ in~\eqref{eq:GW-and-logcov} and letting $\eps := \g^2/\delta^2$, the partition in Corollary A$'$ satisfies
\begin{equation}\label{eq:ent-bound}
\rmH(\mu) - \sum_P\mu(P)\rmH(\mu_{|P}) = \rmH_\mu(\P) \leq C\eps n
\end{equation}
for the approximation of entropies, and the conclusion of Corollary A$'$ gives
\begin{equation}\label{eq:dbar-bound}
\sum_P \mu(P)\cdot \ol{d_n}(\mu_{|P},\xi_{g_P}) \leq C\sqrt{\eps + \frac{\g}{\sqrt{\eps}}} \leq C\max\Big\{\sqrt{\eps},\sqrt{\g/\sqrt{\eps}}\Big\}.
\end{equation}
If $\eps \leq \g^{2/3}$ then~\eqref{eq:ent-bound} matches Eldan's conclusion (i) above up to a constant, and the right-hand side of~\eqref{eq:dbar-bound} is $C\sqrt{\g/\sqrt{\eps}}$, which improves on Eldan's conclusion (ii) when $\eps$ is very small.

However, if $\eps > \g^{2/3}$ (equivalently, $\delta < \g^{2/3}$), then the right-hand side of~\eqref{eq:dbar-bound} is $C\sqrt{\eps}$.  In this case we may reduce $\eps$ (equivalently, increase $\delta$) and improve both of the bounds~\eqref{eq:ent-bound} and~\eqref{eq:dbar-bound}. So there is no value in using our estimates with $\eps > \g^{2/3}$.  By contrast, Eldan's bounds are still potentially useful in this range, which enables them to perform better than ours in some applications. We discuss an example at the end of Section~\ref{sec:part-fn-approx}.

This comparison brings up an interesting puzzle:

\begin{ques}
Suppose we begin with a bound on $\g$ and then use~\eqref{eq:GW-and-logcov} to establish the trade-off $\eps = \g^2/\delta^2$.  Can we vary the proof of Theorem A so that there is some benefit to choosing $\delta < \g^{2/3}$?  This would mean that at least one of the left-hand sides of~\eqref{eq:ent-bound} and~\eqref{eq:dbar-bound} is bounded more effectively when $\delta < \g^{2/3}$ than when $\delta \geq \g^{2/3}$.  (I would guess that any improvement must be to~\eqref{eq:dbar-bound}.)
\end{ques}

\section{A characterization of the terms in the mixture}\label{sec:mix-terms}

In this section, we suppose that the discrete gradient function
\[\prod_i K_i \to C_\rm{b}\Big(\prod_i K_i\Big):\bf{x} \mapsto \nabla f(\bf{x},\,\cdot\,)\]
is Lipschitz from $d_n$ to the uniform norm $\|\cdot\|$.  Let
\[L := \sup\Big\{\frac{\|\nabla f(\bf{x},\,\cdot\,) - \nabla f(\bf{y},\,\cdot\,)\|}{d_n(\bf{x},\bf{y})}:\ \bf{x},\bf{y} \in \prod_i K_i\ \rm{distinct}\Big\}\]
be its Lipschitz constant (beware: $L$ is not the Lipschitz constant of $f$ itself).

Let $\cal{P}$ be a partition of $\prod_i K_i$ as in Corollary A$'$, and let $\bf{y}_P \in P$ for $P \in \cal{P}$ be a selection of points that minimize the distances $\ol{d_n}(\mu_{|P},\xi_{\nabla f(\bf{y}_P,\,\cdot\,)})$.  Recall that the functions $g_P$ of Corollary A$'$ are then given by $g_P := \nabla f(\bf{y}_P,\,\cdot\,)$.

\begin{prop}\label{prop:approx-fix-pt}
	These data satisfy
	\[\sum_{P}\mu(P)\Big\|g_P - \int \nabla f(\bf{x},\,\cdot\,)\ \xi_{g_P}(\d\bf{x})\Big\| \leq \delta n + L\sqrt{\frac{1}{2}\Big(\frac{\rmH_\mu(\P)}{n} + \delta \Big)}.\]
	\end{prop}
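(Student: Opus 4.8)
The plan is to start from the approximate fixed-point relation one expects to hold for the true mixture ingredients and then charge the error to the two quantities already controlled by Theorem~A part (c): the average $\ol{d_n}$-distance from $\mu_{|P}$ to $\xi_{g_P}$, and the Lipschitz constant $L$. Fix a part $P \in \P$. The key identity to exploit is Lemma~\ref{lem:integrals-equal} applied \emph{conditionally on $P$}: averaging the argument there over $\bf{x} \sim \mu_{|P}$ rather than $\mu$ gives
\[
\int \nabla f(\bf{x},\bf{x})\,\mu_{|P}(\d\bf{x}) = \iint \nabla f(\bf{x},\bf{y})\,\xi_{\nabla f(\bf{x},\,\cdot\,)}(\d\bf{y})\,\mu_{|P}(\d\bf{x}),
\]
since the conditional-distribution identification used in that proof is a statement about $\mu$ restricted to any measurable set on which it makes sense — here $P$. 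Now I would replace each occurrence of $\nabla f(\bf{x},\,\cdot\,)$ by the fixed function $g_P = \nabla f(\bf{y}_P,\,\cdot\,)$, incurring an error controlled by~\eqref{eq:P-small} (which gives $\|\nabla f(\bf{x},\,\cdot\,) - g_P\| < \delta n$ for $\bf{x} \in P$), so that up to an additive $\delta n$ we get
\[
\Big| \int g_P(\bf{x})\,\mu_{|P}(\d\bf{x}) - \int g_P(\bf{y})\,\xi_{g_P}(\d\bf{y}) \Big| \lesssim \delta n + \big(\text{further error from replacing }\xi_{\nabla f(\bf{x},\,\cdot\,)}\text{ by }\xi_{g_P}\big).
\]

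The heart of the matter is to pass from this scalar comparison of the linear functional $g_P$ integrated against $\mu_{|P}$ versus $\xi_{g_P}$, to the vector-valued quantity $\big\| g_P - \int \nabla f(\bf{x},\,\cdot\,)\,\xi_{g_P}(\d\bf{x}) \big\|$ appearing in the statement. For this I would use the Lipschitz hypothesis: the map $\bf{x} \mapsto \nabla f(\bf{x},\,\cdot\,) \in (C_\rmb(\prod_i K_i), \|\cdot\|)$ is $L$-Lipschitz from $d_n$, and $\int \nabla f(\bf{x},\,\cdot\,)\,\xi_{g_P}(\d\bf{x})$ is (essentially by definition of $\xi_{g_P}$ and the fixed-point phenomenon for Gibbs measures of additively separable potentials) the natural candidate for the discrete gradient associated to the product measure $\xi_{g_P}$; meanwhile $g_P$ is the discrete gradient at the point $\bf{y}_P \in P$, and the combination of Lemma~\ref{lem:integrals-equal} conditioned on $P$ together with the Lipschitz bound lets me write
\[
\Big\| g_P - \int \nabla f(\bf{x},\,\cdot\,)\,\xi_{g_P}(\d\bf{x}) \Big\| \le \delta n + L \cdot \ol{d_n}(\mu_{|P}, \xi_{g_P}),
\]
because the obstruction to $g_P$ being an exact fixed point is exactly the discrepancy between integrating the $L$-Lipschitz map $\bf{x} \mapsto \nabla f(\bf{x},\,\cdot\,)$ against $\mu_{|P}$ versus against $\xi_{g_P}$, and that discrepancy is bounded by $L$ times the transportation distance between the two measures (one couples them optimally and applies the Lipschitz bound pointwise along the coupling), plus the $\delta n$ slack from replacing the varying gradient by its value at $\bf{y}_P$ on $P$.

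Finally I would average over $P$ with weights $\mu(P)$. The first term contributes $\delta n$. For the second, $\sum_P \mu(P)\, L\, \ol{d_n}(\mu_{|P},\xi_{g_P}) = L \sum_P \mu(P)\, \ol{d_n}(\mu_{|P},\xi_{g_P})$, and since $\bf{y}_P$ was chosen to minimize $\ol{d_n}(\mu_{|P},\xi_{\nabla f(\bf{y}_P,\,\cdot\,)})$, this minimized average is bounded by part (c) of Theorem~A, namely $\sqrt{\tfrac{1}{2}\big(\tfrac{\rmH_\mu(\P)}{n} + \delta\big)}$. Adding the two contributions yields exactly the claimed bound $\delta n + L\sqrt{\tfrac{1}{2}\big(\tfrac{\rmH_\mu(\P)}{n} + \delta\big)}$. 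The step I expect to be the main obstacle is the middle one — cleanly justifying that $\int \nabla f(\bf{x},\,\cdot\,)\,\xi_{g_P}(\d\bf{x})$ really is the right "gradient of the product measure $\xi_{g_P}$" so that the conditional version of Lemma~\ref{lem:integrals-equal} can be turned into a bound on the \emph{uniform norm} of the difference (rather than on a single linear functional), and keeping track that the Lipschitz-plus-optimal-coupling estimate is applied to the correct pair of measures; the $\delta n$ bookkeeping from~\eqref{eq:P-small} and the final Hölder/averaging step are routine.
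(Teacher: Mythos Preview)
Your final displayed inequality
\[
\Big\| g_P - \int \nabla f(\bf{x},\,\cdot\,)\,\xi_{g_P}(\d\bf{x}) \Big\| \le \delta n + L \cdot \ol{d_n}(\mu_{|P}, \xi_{g_P})
\]
together with the averaging over $P$ via Theorem~A(c) is exactly the paper's proof, and your justification of that inequality in the ``because'' clause (insert the intermediate term $\int \nabla f(\bf{x},\,\cdot\,)\,\mu_{|P}(\d\bf{x})$; the first difference is $<\delta n$ by~\eqref{eq:P-small} since $\bf{y}_P\in P$; the second is $\le L\,\ol{d_n}(\mu_{|P},\xi_{g_P})$ by coupling and the Lipschitz bound) is also correct and complete on its own.

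What is \emph{not} correct, and also not needed, is the first half of your outline. The ``conditional version of Lemma~\ref{lem:integrals-equal}'' you invoke is false in general: the proof of that lemma rests on the identification of $\xi_{\partial_i f(\bf{x},\,\cdot\,)}$ with the conditional law of $x_i$ under $\mu$ given $\bf{x}_{[n]\setminus i}$, and this identification breaks as soon as you condition on a set $P$ that depends on the $i^{\rm{th}}$ coordinate (which your $P$ certainly may). So the displayed identity with $\mu_{|P}$ in place of $\mu$ need not hold. Fortunately, nothing in the argument requires it: the scalar comparison of $\int g_P\,\d\mu_{|P}$ with $\int g_P\,\d\xi_{g_P}$, the ``passage from scalar to uniform norm'', and the interpretation of $\int \nabla f(\bf{x},\,\cdot\,)\,\xi_{g_P}(\d\bf{x})$ as the ``gradient of the product measure'' are all red herrings. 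The proof is a two-line triangle inequality through $\int \nabla f(\bf{x},\,\cdot\,)\,\mu_{|P}(\d\bf{x})$, which you already wrote down; just delete everything before it.
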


Assuming that the right-hand side here is small, this means that most of the probability vector $(\mu(P):\ P\in\P)$, when transferred to the list of functions $(g_P:\ P\in \P)$, is on terms that satisfy the `approximate fixed point' equation
\begin{equation}\label{eq:approx-fix}
g_P \approx \int \nabla f(\bf{x},\,\cdot\,)\ \xi_{g_P}(\d\bf{x}).
\end{equation}

\begin{proof}
Since $\bf{y}_P \in P$, our assumption~\eqref{eq:P-small} about $\P$ gives
\[\Big\|g_P - \int \nabla f(\bf{x},\,\cdot\,)\ \mu_{|P}(\d\bf{x})\Big\| < \delta n \quad \forall P \in \cal{P}.\]
On the other hand, the definition of $L$ gives
\[	\Big\|\int \nabla f(\bf{x},\,\cdot\,)\ \mu_{|P}(\d\bf{x}) - \int \nabla f(\bf{x},\,\cdot\,)\ \xi_{g_P}(\d\bf{x})\Big\|
	\leq L\cdot \ol{d_n}(\mu_{|P},\xi_{g_P}) \quad \forall P \in \cal{P}.\]
	Averaging over $\P$ with weights $\mu(P)$, the result follows by conclusion (c) of Theorem A and the triangle inequality.
	\end{proof}

In the special setting of $\{-1,1\}^n$, the approximate equation~\eqref{eq:approx-fix} admits an alternative form which resembles its counterpart in~\cite{EldGro18} more closely.  In that setting, we can identify each discrete gradient $\nabla f(\bf{x},\,\cdot\,)$ with a vector in $\bbR^n$, as previously.  Also, any product measure on $\{-1,1\}^n$ is uniquely specified by its barycentre in $[-1,1]^n$.  Let $\bf{m}_P$ be the barycentre of $\xi_{g_P}$ for each $P$. After these identifications, equation~\eqref{eq:approx-fix} is equivalent to
\begin{equation}\label{eq:approx-fix-EG}
\bf{m}_P \approx \tanh\Big(\int \nabla f(\bf{x},\,\cdot\,)\ \xi_{g_P}(\d \bf{x})\Big) = \tanh\big(D\widetilde{f}(\bf{m}_P)\big),
\end{equation}
where $\widetilde{f}$ is the harmonic extension of $f$ to $[-1,1]^n$ (see~\cite[Subsubsection 3.1.1]{Eld--GWcomp}), $D\widetilde{f}$ is its derivative in the usual sense of calculus, and $\tanh$ is applied coordinate-wise.  Equation~\eqref{eq:approx-fix-EG} is essentially the same as~\cite[equation (8)]{EldGro18}.

Let us return to the formulation in terms of $g_P$ rather than $\bf{m}_P$, but derive from Proposition~\ref{prop:approx-fix-pt} a bound in terms of the Gaussian-width gradient complexity $\cal{D}(f)$ studied by Eldan.  Let $\g:= \cal{D}(f)/n$, as previously; consider an auxiliary parameter $\delta > 0$; and choose a partition $\P$ as promised by inequality~\eqref{eq:GW-and-logcov} for this $\delta$.  Then~\eqref{eq:ent-bound} lets us turn Proposition~\ref{prop:approx-fix-pt} into
	\[\sum_{P}\mu(P)\Big\|g_P - \int \nabla f(\bf{x},\,\cdot\,)\ \xi_{g_P}(\d\bf{x})\Big\| \leq \delta n + CL\sqrt{\frac{\g^2}{\delta^2} + \delta} \quad \forall \delta > 0\]
(where once again $C$ is a universal constant that we do not estimate explicitly, and may change value when it appears again below).

In the regime $L\geq Cn$, which seems to be more relevant to applications, the second right-hand term above is the more significant.  To minimize it, we make the choice $\delta := \g^{2/3}$.  Assuming also that $\g$ is small, we are left with
	\[\sum_{P}\mu(P)\Big\|g_P - \int \nabla f(\bf{x},\,\cdot\,)\ \xi_{g_P}(\d\bf{x})\Big\| \leq \g^{2/3}n + CL\g^{1/3}  \leq CL\g^{1/3}.\]
A result of this flavour is obtained by Eldan and Gross in~\cite[Theorem 9]{EldGro18}.  But their estimates have quite a different shape from ours (for instance, they also involve the Lipschitz constant of $f$ itself, not just that of $\nabla f$), and a direct comparison seems difficult.

\section{Approximation of partition functions}\label{sec:part-fn-approx}

One of the main potential applications of Theorem A is to the estimation of the partition function $\int \rme^f\,\d\l$.  By Proposition~\ref{prop:Gibbs-with-error}, it satisfies
\begin{equation}\label{eq:Gibbs-again}
\log \int \rme^f\,\d\l \geq \int f\,\d\nu - \rmD(\nu\,\|\,\l)
\end{equation}
for any probability measure $\nu$ on $\prod_i K_i$, with equality if and only if $\nu = \mu$, the Gibbs measure associated to $f$.  A structural result like Theorem A allows one to achieve approximate equality using a product measure $\nu$ on the right-hand side.  This can help us estimate the expression on the left.

\begin{prop}\label{prop:part-fn-approx}
	If $f$ is $L$-Lipschitz for the metric $d_n$ and satisfies~\eqref{eq:bdd-cplx}, then
	\begin{equation}\label{eq:part-fn-approx}
	\log \int \rme^f\,\d\l \leq \sup_\xi\Big[\int f\,\d\xi - \rmD(\xi\,\|\,\l)\Big] + (\eps + \delta)n + \sqrt{\frac{\eps + \delta}{2}}L,
	\end{equation}
where the supremum runs over product measures on $\prod_i K_i$.
	\end{prop}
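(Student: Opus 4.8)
The plan is to combine Gibbs' variational principle with the structure theorem: express $\log\int\rme^f\,\d\l$ exactly, split it across the partition furnished by Corollary~A$'$, and on each cell replace the conditioned measure by a suitable product measure, paying for the replacement with the two conclusions of Corollary~A$'$.

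Concretely, apply Corollary~A$'$ (whose hypothesis~\eqref{eq:bdd-cplx} we have assumed) to obtain a partition $\P$ with $|\P|\le\rme^{\eps n}$ and additively separable functions $g_P$ satisfying $\sum_{P}\mu(P)\rmD(\mu_{|P}\,\|\,\xi_{g_P}) < (\eps+\delta)n$. Since $\mu$ is the Gibbs measure of $f$, Proposition~\ref{prop:Gibbs-with-error} identifies the left-hand side of~\eqref{eq:part-fn-approx} with $\int f\,\d\mu - \rmD(\mu\,\|\,\l)$, and then the modified chain rule (Lemma~\ref{lem:mod-chain}), together with $\int f\,\d\mu = \sum_P\mu(P)\int f\,\d\mu_{|P}$, turns this into
\[\rmH_\mu(\P) + \sum_{P\in\P}\mu(P)\Big[\int f\,\d\mu_{|P} - \rmD(\mu_{|P}\,\|\,\l)\Big].\]

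The main step is to estimate a single summand. Here I would compare $\mu_{|P}$ not with $\xi_{g_P}$ but with $\t\xi_P$, the product of the one-dimensional marginals of $\mu_{|P}$; equivalently, $\t\xi_P$ is the product measure minimising $\rmD(\mu_{|P}\,\|\,\,\cdot\,)$. Two elementary facts make this the right choice. First, additivity of Kullback--Leibler divergence over product coordinates yields the Pythagorean identity $\rmD(\mu_{|P}\,\|\,\l) = \rmD(\mu_{|P}\,\|\,\t\xi_P) + \rmD(\t\xi_P\,\|\,\l)$, so that
\[\int f\,\d\mu_{|P} - \rmD(\mu_{|P}\,\|\,\l) = \Big[\int f\,\d\t\xi_P - \rmD(\t\xi_P\,\|\,\l)\Big] + \int f\,\d(\mu_{|P}-\t\xi_P) - \rmD(\mu_{|P}\,\|\,\t\xi_P);\]
the bracketed term is at most $\sup_\xi\big[\int f\,\d\xi - \rmD(\xi\,\|\,\l)\big]$ (supremum over product measures), and the final term is non-negative, hence can be dropped. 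Second, since $f$ is $L$-Lipschitz for $d_n$, the definition~\eqref{eq:trans} of $\ol{d_n}$ bounds $\int f\,\d(\mu_{|P}-\t\xi_P)$ by $L\,\ol{d_n}(\mu_{|P},\t\xi_P)$, with a single factor of $L$; comparing instead with $\xi_{g_P}$ would leave a residual $g_P$ inside the integral and cost a factor $2L$, because $g_P = \nabla f(\bf{y}_P,\,\cdot\,)$ is itself only $L$-Lipschitz for $d_n$.

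Averaging over $P$ with weights $\mu(P)$ then leaves $\sup_\xi\big[\int f\,\d\xi - \rmD(\xi\,\|\,\l)\big] + \rmH_\mu(\P) + L\sum_P\mu(P)\ol{d_n}(\mu_{|P},\t\xi_P)$. To close the estimate: $\rmH_\mu(\P)\le\log|\P|\le\eps n$; and since $\t\xi_P$ minimises divergence among product measures, $\rmD(\mu_{|P}\,\|\,\t\xi_P)\le\rmD(\mu_{|P}\,\|\,\xi_{g_P})$, so Marton's inequality (Proposition~\ref{prop:Marton}) and then H\"older's inequality give
\[\sum_P\mu(P)\ol{d_n}(\mu_{|P},\t\xi_P) \le \sum_P\mu(P)\sqrt{\tfrac{1}{2n}\rmD(\mu_{|P}\,\|\,\xi_{g_P})} \le \sqrt{\tfrac{1}{2n}\sum_P\mu(P)\rmD(\mu_{|P}\,\|\,\xi_{g_P})} \le \sqrt{\tfrac{\eps+\delta}{2}}.\]
Since $\eps n\le(\eps+\delta)n$, this is~\eqref{eq:part-fn-approx}. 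The point that needs care is precisely the choice of comparison measure on each cell: one wants the divergence correction to have the favourable (non-negative) sign, which forces the marginal product $\t\xi_P$ rather than $\xi_{g_P}$, and one then has to recover the transportation estimate for $\t\xi_P$ from the one that Corollary~A$'$ supplies only for $\xi_{g_P}$.
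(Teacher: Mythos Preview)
Your argument is correct, but it follows a genuinely different route from the paper's. The paper starts from Gibbs' identity and then invokes Corollary~\ref{cor:DTC-of-Gibbs} to rewrite $\rmD(\mu\,\|\,\l)$ as $\int \rmD(\xi_{\nabla f(\bf{y},\,\cdot\,)}\,\|\,\l)\,\mu(\d\bf{y}) - \DTC(\mu)$; it then bounds $\DTC(\mu)$ by $(\eps+\delta)n$ via part~(a) of Theorem~A and controls $\int f\,\d\mu - \iint f\,\d\xi_{\nabla f(\bf{y},\,\cdot\,)}\,\mu(\d\bf{y})$ via part~(c). Your approach bypasses the DTC identity entirely: after the modified chain rule, you compare $\mu_{|P}$ on each cell with its own product-of-marginals $\t\xi_P$, exploiting the Pythagorean identity $\rmD(\mu_{|P}\,\|\,\l)=\rmD(\mu_{|P}\,\|\,\t\xi_P)+\rmD(\t\xi_P\,\|\,\l)$ to make the unwanted divergence term have the favourable sign. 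The trade-off is that the paper's proof exhibits explicitly the comparison measures $\xi_{\nabla f(\bf{y},\,\cdot\,)}$ tied to the gradient structure, which is more in keeping with the rest of the paper (Section~\ref{sec:mix-terms} in particular), whereas your proof is more self-contained, needing only Corollary~A$'$, the chain rule, and Marton, without any call on Lemma~\ref{lem:DTC-and-Han} or Corollary~\ref{cor:DTC-of-Gibbs}. Your observation that choosing the $\rmI$-projection $\t\xi_P$ rather than $\xi_{g_P}$ saves the factor $2$ in the Lipschitz estimate is a nice point; the paper achieves the same constant by a different mechanism, namely that the term $\rmD(\xi_{\nabla f(\bf{y},\,\cdot\,)}\,\|\,\l)$ appears directly in the DTC identity and so need not be produced by a second Lipschitz bound.
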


\begin{rmk}
	In case each $K_i$ is finite and $\l_i$ is uniform, standard manipulations turn~\eqref{eq:part-fn-approx} into
\[\log \sum_{\bf{x}}\rme^{f(\bf{x})} \leq \sup_\xi\Big[\rmH(\xi) + \int f\,\d\xi\Big] + (\eps + \delta)n + \sqrt{\frac{\eps + \delta}{2}}L.\]
This is the relevant form for many applications of nonlinear large deviations.
\end{rmk}

\begin{proof}
	Let $\cal{P}$ be the partition implied by~\eqref{eq:bdd-cplx}.  Gibbs' identity and Corollary~\ref{cor:DTC-of-Gibbs} give
	\begin{align}\label{eq:Gibbs-id}
\log \int \rme^f\,\d\l &= \int f\,\d\mu - \rmD(\mu\,\|\,\l) \nonumber \\
&= \int f\,\d\mu + \DTC(\mu) - \int \rmD(\xi_{\nabla f(\bf{y},\,\cdot\,)}\,\|\,\l)\,\mu(\d\bf{y}).
\end{align}
	Using part (c) of Theorem A, we have
	\begin{align}\label{eq:ints-diff}
	\int f\,\d\mu &= \sum_P\mu(P)\int f\,\d\mu_{|P} \nonumber\\
	&\leq \iint f\,\d\xi_{\nabla f(\bf{y},\,\cdot\,)}\,\mu(\d\bf{y}) + L\sum_P\mu(P) \int \ol{d_n}(\mu_{|P},\xi_{\nabla f(\bf{y},\,\cdot\,)})\,\mu_{|P}(\d\bf{y}) \nonumber\\
	&\leq \iint f\,\d\xi_{\nabla f(\bf{y},\,\cdot\,)}\,\mu(\d\bf{y}) + \sqrt{\frac{\eps + \delta}{2}}L.
	\end{align}
Inserting this bound and also part (a) of Theorem A into~\eqref{eq:Gibbs-id}, we obtain
\[\log\int \rme^f\,\d\l \leq \int\Big[\int f\,\d\xi_{\nabla f(\bf{y},\,\cdot\,)} - \rmD(\xi_{\nabla f(\bf{y},\,\cdot\,)}\,\|\,\l)\Big]\,\mu(\d\bf{y}) + (\eps + \delta)n + \sqrt{\frac{\eps + \delta}{2}}L.\]
Now we bound the integral over $\bf{y}$ by the supremum over all product measures $\xi$, as in~\eqref{eq:part-fn-approx}.
	\end{proof}

Partition-function estimation was the heart of the original paper~\cite{ChaDem16} which instigated research into nonlinear large deviations.  A general discussion of the usefulness of such estimates is given in the Introduction to that paper.  In~\cite{Yan--nldp}, Yan extends Chatterjee and Dembo's estimates from the cube $\{0,1\}^n$ to a more general class of products of subsets of Banach spaces.  In the setting of $\{-1,1\}^n$, Eldan shows how partition-function estimates can be derived from his main structure theorem in~\cite[Corollary 2]{Eld--GWcomp}.  One can compare Proposition~\ref{prop:part-fn-approx} with Eldan's results using the ideas from Subsection~\ref{subs:Eldan-compar}, but I have not been able to recover the full strength of Eldan's estimate as a consequence of Theorem A.  This seems to be related to the discussion at the end of Subsection~\ref{subs:Eldan-compar} above.  In the notation of that discussion, the appropriate choice of $\eps$ for the proof of~\cite[Corollary 2]{Eld--GWcomp} is
\[C\Big(\frac{L}{n}\Big)^{2/3}\g^{1/3},\]
which is generally much larger than the threshold $\eps = \g^{2/3}$ that marks the end of the usefulness of our estimates, as discussed in Subsection~\ref{subs:Eldan-compar}.

\bibliographystyle{alpha}
\bibliography{bibfile}

%\vspace{7pt}

%\noindent\small{\textsc{Tim Austin}

%\vspace{7pt}

%\noindent Email: \verb|tim@math.ucla.edu|

%\noindent URL: \verb|math.ucla.edu/~tim|}

\end{document}